\documentclass[10pt]{article}
\usepackage{a4wide}
\usepackage{amsmath,amssymb,amsthm}
\usepackage[mathscr]{eucal}
\usepackage{graphics}
\usepackage{epsfig}
\usepackage{psfrag}

\newcommand{\BIGOP}[1]{\mathop{\mathchoice%
{\raise-0.22em\hbox{\huge $#1$}}%
{\raise-0.05em\hbox{\Large $#1$}}{\hbox{\large $#1$}}{#1}}}
\newcommand{\bigtimes}{\BIGOP{\times}}

\newcommand{\nucl}{\mathcal{N}}
\newcommand{\orb}{\mathcal{O}}

\newcommand{\Q}{{\rm Q}}
\newcommand{\CPi}{{\rm C}\Pi}
\newcommand{\OS}{{\rm OS}}
\newcommand{\Sym}{{\rm Sym}}

\newcommand{\avt}{\mathcal{A}}
\newcommand{\Aut}{\mathop{\rm Aut}\nolimits}
\newcommand{\T}{T}
\newcommand{\FRG}{{\rm RAut}(\T)}
\newcommand{\FSG}{{\rm FAut}(\T)}

\newcommand{\pol}[1][0]{{\rm Pol}({#1})}

\newtheorem{theorem}{Theorem}
\newtheorem{proposition}[theorem]{Proposition}
\newtheorem{corollary}[theorem]{Corollary}
\newtheorem{lemma}{Lemma}
\theoremstyle{definition}
\newtheorem{definition}{Definition}
\newtheorem{example}{Example}
\newtheorem{remark}{Remark}


\sloppy \hyphenation{auto-mor-phism}

\begin{document}

\title{\textbf{On the conjugacy problem for finite-state automorphisms of
regular rooted trees\\ \large with an appendix by  Rapha\"el M. Jungers}}

\author{Ievgen~V.~Bondarenko, 
Natalia~V.~Bondarenko,\\ Said~N.~Sidki\footnote{The author acknowledges support from the
Brazilian Conselho Nacional de Pesquisa and from FAPDF.} ,
Flavia~R.~Zapata}

\maketitle

%
%
%

\begin{abstract}
We study the conjugacy problem in the automorphism group $\Aut(T)$ of a regular rooted tree $T$ and in its subgroup
$\FSG$ of finite-state automorphisms. We show that under the contracting condition and the finiteness of what we call
the orbit-signalizer, two finite-state automorphisms are conjugate in $\Aut(T)$ if and only if they are conjugate in
$\FSG$, and that this problem is decidable. We prove that both these conditions are satisfied by bounded automorphisms
and establish that the (simultaneous) conjugacy problem in the group of bounded automata
is decidable.\\

\noindent \textbf{Mathematics Subject Classification 2010}: 20E08, 20F10\\

\noindent \textbf{Keywords}: automorphism of a rooted tree, conjugacy problem, finite-state automorphism, finite
automaton, bounded automaton

\end{abstract}

\section{Introduction}

The interconnection between automata theory and algebra produced in the last three decades many important constructions
such as self-similar groups and semigroups, branch groups, iterated monodromy groups, self-similar (self-iterating) Lie
algebras, branch algebras, permutational bimodules, etc. (see
\cite{self_sim_groups,sid09,gns,branch,barth:branch_rings,PSZ10} and the references therein).

The connection between groups and automata occurs via a natural correspondence between invertible input-output automata
over the alphabet $X=\{1,2,\ldots ,d\}$ and automorphisms of a regular one-rooted $d$-ary tree $T$. To present this
correspondence let us index the vertices of the tree $T$ by the elements of the free monoid $X^{\ast}$, freely
generated by the set $X$ and ordered by $v\leq u$ provided $u$ is a prefix of $v$. The group $\Aut(T)$ of all
automorphisms of the tree $T$ decomposes as the permutational wreath product $\Aut(T)\cong \Aut(T)\wr \Sym(X)$, where
$\Sym(X)$ is the symmetric group on the set $X$. This decomposition allows us to represent automorphisms in the form
$g=(g|_{1},g|_{2},\ldots ,g|_{d})\pi_{g} $, where $\pi_{g}\in \Sym(X)$ is the permutation induced by the action of $g$
on the first level of the tree $T$. Iteratively, we can define the automorphism $g|_{v}=g|_{x_{1}}|_{x_{2}}\ldots
|_{x_{n}}$ for every vertex $v=x_{1}x_{2}\ldots x_{n}$ of the tree $T$, where $x_{i}\in X$. Then every automorphism
$g\in \Aut(T)$ corresponds to an input-output automaton $\avt(g)$ over the alphabet $X$ and with the set of states
$\Q(g)=\{g|_{v}\mid v\in X^{\ast}\}$. The automaton $\avt(g)$ transforms the letters as follows: if the automaton is in
state $g|_{v}$ and reads a letter $x\in X$ then it outputs the letter $y=x^{g|_{v}}$ and the state changes to
$g|_{vx}$; these operations can be best described by the labeled edge $g|_{v}\overset{x|y}{\longrightarrow}g|_{vx}$.
Following the terminology of the automata theory every automorphism $g|_{v}$ is called the \textit{state} of $g$ at
$v$.

Using this correspondence with automata one can define several classes of special subgroups of the group $\Aut(T)$. A
subgroup $G<\Aut(T)$ is called \textit{state-closed} or \textit{self-similar} if all states of every element of $G$ are
again elements of $G$. Self-similar groups play an important role in modern geometric group theory, and have
applications to diverse areas of mathematics. In particular, self-similar groups are connected with fractal geometry
through limit spaces and also with dynamical systems through iterated monodromy groups as developed by V.~Nekrashevych
\cite{self_sim_groups}. The set theoretical union of all finitely generated self-similar subgroups in $\Aut(T)$ is a
countable group denoted by $\FRG$ called the \textit{group of functionally recursive automorphisms} \cite{bru-sid}.

Automorphisms of the tree $T$ which correspond to finite-state automata are called \textit{finite-state}. More
precisely, an automorphism $g\in \Aut(T)$ is finite-state if the set of its states $\Q(g)$ is finite. The set of all
finite-states automorphisms forms a countable group denoted by $\FSG$. Every finite-state automorphism is functionally
recursive, and hence the group $\FSG $ is a subgroup of $\FRG$.

Other natural subgroups of $\Aut(T)$ are the groups $\pol[n]$ of polynomial automata of degree $n$ for every $n\geq -1$
and their union $\mathrm{Pol}({\infty})=\cup_{n}\pol[n]$. These groups were introduced by S.~Sidki in
\cite{sidki:circ}, who tried to classify subgroups of $\FSG$ by the cyclic structure of the associated automata and by
the growth of the number of paths in the automata avoiding the trivial state. Especially important is the \textit{group
$\pol[0]$ of bounded automata} whose elements are called bounded automorphisms. A finite-state automorphism $g$ is
\textit{bounded} if the number of paths of length $m$ in the automaton $\avt(g)$ avoiding the trivial state is bounded
independently of $m$. It is to be noted that most of the studied self-similar groups are subgroups of $\pol[0]$. In
particular, the Grigorchuk group \cite{grig}, the Gupta-Sidki group \cite{gupta-sidki}, the Basilica \cite{grig-zuk2}
and BSV groups \cite{bsv}, the finite-state spinal groups \cite{branch}, the iterated monodromy groups of
post-critically finite polynomials \cite{self_sim_groups}, and many others, are generated by bounded automorphisms.
Moreover, it is shown in \cite{bondnek:pcf} that finitely generated self-similar subgroups of $\pol[0]$ are precisely
those finitely generated self-similar groups whose limit space is a post-critically finite self-similar set which play
an important role in the development of analysis on fractals (see \cite{kigami:anal_fract}).

In this paper we consider the conjugacy problem and the order problem in the groups $\Aut(T)$, $\FRG$, $\FSG$,
$\pol[0]$. It is well known that the word problem is solvable in the group $\FSG$ and hence in all its subgroups, while
it is an open problem in the group $\FRG$. Furthermore, the order and conjugacy problems are open in $\FSG$ and $\FRG$.
The conjugacy classes of the group $\Aut(T)$ were described in \cite{sid98,gawr:conju}. It is not difficult to
construct two finite-state automorphisms which are conjugate in $\Aut(T)$ but not conjugate in $\FSG$ (see \cite{gns}).
At the same time, two finite-state automorphisms of finite order are conjugate in $\Aut(T)$ if and only if they are
conjugate in $\FSG$ (see \cite{russev}). The conjugacy classes of the group $\mathrm{Pol}({-1})$ of finitary
automorphisms were determined for the binary tree in \cite{bru-sid} and for the general case in \cite{marcio-tese}.

The conjugacy problem was solved for some well-known finitely generated subgroups of $\pol[0]$. In
particular, the solution of the conjugacy problem in the Grigorchuk group was given in
\cite{leonov,rozhkov98}, and it was generalized in \cite{wil-zal,grig-wil} to certain classes of branch
groups and their subgroups of finite index. Moreover, it was shown in \cite{lmu} that the conjugacy problem
in the Grigorchuk group is decidable in polynomial time. The conjugacy problem for the Basilica and BSV
groups was treated in \cite{grig-zuk2}. A finitely generated self-similar subgroup of $\FSG$ with
unsolvable conjugacy problem was constructed in a recent preprint \cite{SV10}.

The general approach in considering any algorithmic problem dealing with automorphisms of the tree $T$ is to reduce the
problem to some property of their states. The order and the conjugacy problems lead us to the following definition. For
an automorphism $a\in \Aut(T)$ consider the orbits $Orb_{a}(v)$ of its action on the vertices $v$ of the tree and
define the set
\begin{equation*}
\OS(a)=\left\{ a^{m}|_{v}\mid v\in X^{\ast},\text{}m=|Orb_{a}(v)|\right\}
\end{equation*}
which we call the \textit{orbit-signalizer} of $a$. It is not difficult to see that the order problem is decidable for
finite-state automorphisms with finite orbit-signalizers. We prove that every bounded automorphism has finite
orbit-signalizer and hence the order problem is decidable for bounded automorphisms.

\begin{proposition}
The order problem for bounded automorphisms is decidable.
\end{proposition}

We treat the conjugacy problem firstly in the group $\Aut(T)$. Given two automorphisms $a,b\in \Aut(T)$ we
construct a \textit{conjugator graph} $\Psi (a,b)$ based on the sets $\OS(a),\OS(b)$, which portrays the
inter-dependence among the different conjugacy subproblems encountered in trying to find a conjugator for
the pair $a,b$, and which leads to the construction of a conjugator if it exists.

\begin{theorem}
Two finite-state automorphisms $a,b$ with finite orbit-signalizers are conjugate in $\Aut(T)$ if and only if they are
conjugate in $\FRG$ if and only if the conjugator graph $\Psi (a,b)$ is nonempty.
\end{theorem}

An important class of self-similar groups are contracting groups. This property for groups corresponds to the expanding
property in a dynamical system. A finitely generated self-similar group is contracting if the length of its elements
asymptotically contracts when applied to their states. A finite-state automorphism is called \textit{contracting} if
the self-similar group generated by its states is contracting. Bounded automorphisms are contracting (see
\cite{bondnek:pcf}), however in contrast to bounded automorphisms, contracting automorphisms do not form a group. For
contracting automorphisms with finite orbit-signalizers, we prove that conjugation is controlled by the group of
finite-state automorphisms.

\begin{theorem}
Two contracting automorphisms with finite orbit-signalizers are conjugate in $\Aut(T)$ if and only if they are
conjugate in $\FSG$.
\end{theorem}

We prove a number of results for the conjugacy problem for bounded automorphisms in Section~4, which we
collect in the following theorem.

\begin{theorem}
\begin{enumerate}
\item The (simultaneous) conjugacy problem for bounded automorphisms in $\Aut(T)$ is decidable.

\item Two bounded automorphisms are conjugate in the group $\Aut(T)$
if and only if they are conjugate in the group $\FSG$.

\item The (simultaneous) conjugacy problem in $\pol[0]$ is
decidable.

\item Two bounded automorphisms are conjugate in the group $\mathrm{Pol}({\infty})$ if and
only if they are conjugate in the group $\pol[0]$.
\end{enumerate}
\end{theorem}

We develop two algorithms for the solution of the conjugacy problem in the group $\pol[0]$. The first one
exploits the cyclic structure of bounded automorphisms. While the second exploits the number of active
states of bounded automorphisms. This last counting argument translates to a bounded trajectory problem for
nonnegative matrices which is shown to be decidable in the appendix by Rapha\"el M. Jurgens. The methods
developed in this study provide a construction for possible conjugators whenever the associated conjugacy
problems are solved.

The last section presents some examples, which illustrate the solution of
the conjugacy problems, and describes the connection between the property of
having finite orbit-signalizers and other properties of automorphisms.

\section{Preliminaries}

The set $X^{*}$ is considered as the set of vertices of the tree $T$ as described in Introduction. The
length of a word $v=x_1x_2\ldots x_n\in X^{*}$ for $x_i\in X$ is denoted by $|v|=n$. The set $X^n$ of words
of length $n$ forms the \textit{$n$-th level} of the tree $T$. The vertices $X^{*}$ are ordered by the
lexicographic order on words induced by the order on the set $X$.

We are using right actions, so the image of a vertex $v\in X^{*}$ under the action of an automorphism $g\in\Aut(T)$ is
written as $v^{g}$ or $(v)g$, and hence $v^{g\cdot h}=(v^g)^h$.

The state $g|_v$ of $g$ at $v$, which was defined in Introduction, is the unique automorphism of the tree $T$ such that
the equality $(vw)^g=v^g(w)^{g|_v}$ holds for all words $w\in X^{*}$. Computation of states of automorphisms is done as
follows:
\begin{align*}
(g\cdot h)|_{v}=g|_{v}\cdot h|_{(v)g}, \quad g^{-1}|_{v}=(g|_{(v)g^{-1}})^{-1}, \quad g^{n}|_{v}=g|_{v}\cdot
g|_{(v)g}\cdot \ldots \cdot g|_{(v)g^{n-1}}
\end{align*}
for all $g,h\in\Aut(T)$ and $v\in X^{*}$. Therefore, conjugation is computed by the rules
\begin{eqnarray*}
\left( h^{-1}gh\right) |_{v} &=&\left( h^{-1}\right) |_{v}\left( gh\right)
|_{\left( v\right) h^{-1}}=\left( h|_{\left( v\right) h^{-1}}\right)
^{-1}g|_{\left( v\right) h^{-1}}h|_{\left( v\right) h^{-1}g}; \\
\left( h^{-1}gh\right) |_{\left( v\right) h} &=&\left( h|_{v}\right)
^{-1}g|_{v}h|_{\left( v\right) g},
\end{eqnarray*}
and if $(v)g=v$ then
\begin{equation*}
\left( h^{-1}gh \right)|_{\left( v\right) h}=\left(
h|_{v}\right)^{-1}g|_{v}h|_{v}.
\end{equation*}

The multiplication of two automorphisms expressed  as $g=(g|_{1},g|_{2},\ldots ,g|_{d})\pi_{g}$,
$h=(h|_{1},h|_{2},\ldots ,h|_{d})\pi_{h}$ is performed by the rule
\begin{equation*}
g\cdot h=(g|_{1}h|_{\left( 1\right) g},g|_{2}h|_{\left( 2\right) g},\ldots ,g|_{d}h|_{\left( d\right) g})\pi_{g}\pi
_{h}.
\end{equation*}
Every permutation $\pi \in \Sym(X)$ can be identified with the automorphism $(e,e,\ldots ,e)\pi $ of the tree $T$
acting on the vertices by the rule $(xv)^{\pi}=x^{\pi}v$ for $x\in X$ and $v\in X^{\ast}$.

The group $\FRG$ of functionally recursive automorphisms consists of automorphisms which can be constructed as follows.
A finite set of automorphisms $g_1,g_2,\ldots,g_m$ is called \textit{functionally recursive} if there exist words
$w_{ij}$ over $\{g_1^{\pm 1},g_2^{\pm 1},\ldots,g_m^{\pm 1}\}$ and permutations $\pi_i\in\Sym(X)$ such that
\begin{align*}
g_1&= (w_{11},w_{12},\ldots,w_{1d})\pi_1 \\
g_2&= (w_{21},w_{22},\ldots,w_{2d})\pi_2 \\
& \ \ \vdots \\
g_m&= (w_{m1},w_{m2},\ldots,w_{md})\pi_m.
\end{align*}
This system has a unique solution in the group $\Aut(T)$%
, here the action of each element $g_i$ on the first level of the tree $T$
is given by the permutation $\pi_i$, and the action of the state $g_i|_j$ is
uniquely defined by the word $w_{ij}$. An automorphism of the tree is called
\textit{functionally recursive} provided it is an element of some
functionally recursive set of automorphisms.

For an automorphism $g\in \Aut(T)$ define the numerical sequence
\begin{equation*}
\theta_{k}(g)=|\{v\in X^{k}:g|_{v}\mbox{ acts non-trivially on }X\}|\ %
\mbox{ for }\ k\geq 0,
\end{equation*}
which describes the activity growth of $g$. Looking at the asymptotic behavior of the sequence $\theta_{k}(\cdot)$ we
can define different classes of automorphisms of the tree $T$.

The elements $g\in\Aut(T)$, whose sequence $\theta_k(g)$ is eventually zero, are called \textit{finitary
automorphisms}. In other words, an automorphism $g$ is finitary if there exists $k$ such that $g|_v=1$ for all $v\in
X^k$, and the smallest $k$ with this property is called the \textit{depth} of $g$. The set of all finitary
automorphisms forms a group denoted by $\mathrm{Pol}({-1})$.

For a finite-state automorphism $g\in \FSG$ the sequence $\theta_{k}(g)$ can grow either exponentially or
polynomially (see \cite[Corollary~7]{sidki:circ}). The set of all finite-state automorphisms $g\in \FSG$,
whose sequence $\theta _{k}(g)$ is bounded by a polynomial of degree $n $, is \textit{the group $\pol[n]$
of polynomial automata of degree $n$}. In the case $n=0$, when the sequence $\theta_{k}(g)$ is bounded,
then the automorphism $g$ is called \textit{bounded} and the group $\pol[0]$ is called \textit{the group of
bounded automata}. We get an ascending chain of subgroups $\pol[n]<\mathrm{Pol}({n+1})$ for $n\geq -1$. The
union $\mathrm{Pol}({\infty})=\cup_{n}\pol[n]$ is called \textit{the group of polynomial automata}. If we
replace the condition \textquotedblleft $g|_{v}$ acts non-trivially on $X$" by \textquotedblleft $g|_{v}$
is non-trivial" in the definition of the sequence $\theta_{k}(\cdot)$ then we still get the same groups
$\pol[n]$.

The bounded and polynomial automorphisms can be characterized by the cyclic structure of their automata as
described in \cite{sidki:circ}. A cycle in an automaton is called \textit{trivial} if it is a loop at the
state corresponding to the trivial automorphism. Then an automorphism $g\in \FSG$ is polynomial if and only
if any two different non-trivial cycles in the automaton $\avt(g)$ are disjoint. Moreover, $g\in \pol[n]$,
when $n$ is the largest number of non-trivial cycles connected by a directed path. In particular, an
automorphism $g\in \FSG$ is bounded if and only if any two different non-trivial cycles in the automaton
$\avt(g)$ are disjoint and not connected by a directed path. We say that $g$ is \textit{circuit} if there
exists a non-empty word $v\in X^{\ast}$ such that $g=g|_{v}$, i.e. $g$ lies on a cycle in the automaton
$\avt(g)$. If $g$ is a circuit bounded automorphism then the state $g|_{v}$ is finitary for every word $v$,
which is not read along the circuit.

\section{Conjugation in groups of automorphisms of the tree}

\label{Section Conj}

Let us recall the description of the conjugacy classes in the group $\Aut(T)$.

\textbf{Conjugacy classes in $\Aut(T)$}. First, recall that every conjugacy class of the symmetric group $\Sym(X)$ has
a unique (left-oriented) representative of the form
\begin{equation}
(1,2,\ldots ,n_{1})(n_{1}+1,n_{1}+2,\ldots ,n_{2})\ldots
(n_{k-1}+1,n_{k-1}+2,\ldots ,n_{k}),  \label{eqn_conjug_repre_Sym(X)}
\end{equation}
where $1\leq n_{1}\leq n_{2}-n_{1}\leq \ldots \leq n_{k}-n_{k-1}$ and $n_{k}=d=|X|$. This observation can
be generalized to the group $\Aut(T)$ (see \cite[Section~4.1]{sid98}). Given an automorphism
$a=(a|_{1},a|_{2},...,a|_{d})\pi_{a}$ in $\Aut(T)$ we can conjugate it to a unique (left-oriented)
representative of its conjugacy class using the following basic steps.

1. Conjugate the permutation $\pi_a\in\Sym(X)$ to its unique left-oriented conjugacy representative
(\ref{eqn_conjug_repre_Sym(X)}).

2. Consider every cycle $\tau_i=(n_i+1,n_i+2,\ldots,n_{i+1})$ in the
representative (\ref{eqn_conjug_repre_Sym(X)}) of $\pi_a$ and define
\begin{equation*}
h_{i+1}=\left(a|_{n_i+1},e,(a|_{n_i+2}) ^{-1},(a|_{n_i+2} a|_{n_i+3})^{-1},...,(a|_{n_i+2} a|_{n_i+3}\ldots
a|_{n_{i+1}-1})^{-1}\right).
\end{equation*}
Conjugate $a$ by the the automorphism $h=(h_1,h_2,\ldots,h_k)$ to obtain $h^{-1}ah=(a_1,a_2,\ldots,a_k)$, where
\begin{equation*}
a_i=\left(e,...,e,a|_{n_i+2}\ldots a|_{n_{i+1}} a|_{n_i+1}\right)\tau_i.
\end{equation*}

3. Apply the steps 1 and 2 to the automorphisms $a|_{n_i+2}\ldots
a|_{n_{i+1}} a|_{n_i+1}$.

It is direct to see that an infinite iteration of this procedure produces a well-defined automorphism of the tree which
conjugates $a$ into a representative and that two different representatives are not conjugate in $\Aut(T)$.

Another approach is based on the fact that two permutations are conjugate if and only if they have the same
cycle type. The \textit{orbit type} of an automorphism $a\in \Aut(T)$ is the labeled graph, whose vertices
are the orbits of $a$ on $X^{\ast }$, every orbit is labeled by its cardinality, and we connect two orbits
$\mathcal{O}_{1}$ and $\mathcal{O}_{2}$ by an edge if there exist vertices $v_{1}\in \mathcal{O}_{1}$ and
$v_{2}\in \mathcal{O}_{2}$, which are adjacent in the tree $T$. Then two automorphisms of the tree $T$ are
conjugate if and only if their orbit types are isomorphic as labeled graphs (see \cite[Theorem
3.1]{gawr:conju}). In particular it follows, that the group $\Aut(T)$ is ambivalent (that is, every element
is conjugate with its inverse). More generally, every automorphism $a\in \Aut(T)$ is conjugate with
$a^{\xi}$ for every unit $\xi $ of the ring $\mathbb{Z}_{m}$ of $m$-adic integers, where $m$ is the
exponent of the group $\Sym(X)$ (see \cite[Section~4.3]{sid98}).\label{page_orbit_type}

\vspace{0.2cm}\textbf{Conjugation lemma}. We say that an element $h$ is a \textit{conjugator for the pair}
$(a,b)$ if $h^{-1}ah=b$, and we use the notation $h:a\rightarrow b$. For $a,b\in \Aut(T)$ and the
permutations $\pi_{a},\pi_{b}\in \Sym(X)$ induced by the action of $a$ and $b$ on $X$, the set of
permutational conjugators for the pair $(\pi_{a},\pi _{b})$ is denoted by
\begin{equation*}
\CPi (a,b)=\{\pi \in \Sym(X):\pi ^{-1}\pi_{a}\pi =\pi_{b}\}
\end{equation*}
(this set can be empty).

The study of the conjugacy problem in the automorphism groups of the tree $T$
is based on the following standard lemma.

\begin{lemma}
\label{lemma_conju_problem} Let $a,b,h\in\Aut(T)$.

\begin{enumerate}
\item If $h^{-1}ah=b$ then $|Orb_{a}(v)|=|Orb_{b}(v^{h})|$ for every $v\in
X^{\ast}$ and
\begin{equation*}
(h|_{v})^{-1}a^{m}|_{v}h|_{v}=b^{m}|_{v^{h}},
\end{equation*}
where $m=|Orb_{a}(v)|$.

\item Let $\mathcal{O}_{1},\mathcal{O}_{2},\ldots ,\mathcal{O}_{k}$ be the
orbits of the action of $a$ on $X$. If there exists $\pi\in\CPi(a,b)$ such that $a^{|\orb_i|}|_{v}$ and
$b^{|\orb_i|}|_{v^{\pi}}$ are conjugate in $\Aut(T)$ for every $i=1,2,\ldots,k$, where $v\in\orb_i$ is an arbitrary
point, then $a$ and $b$ are conjugate in $\Aut(T)$.
\end{enumerate}
\end{lemma}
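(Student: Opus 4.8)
The plan is to treat the two parts separately: Part~1 by a direct computation from the state formulas of Section~2, and Part~2 by a constructive argument that builds a conjugator $h$ from its first-level states, reducing the genuine constraints to the ``closing-up'' conditions along the first-level orbits of $a$, which are precisely what the hypothesis supplies.

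For Part~1, I would first rewrite $h^{-1}ah=b$ as the intertwining relation $(u^a)^h=(u^h)^b$ for all $u\in X^{\ast}$, so that $(u^{a^n})^h=(u^h)^{b^n}$ for all $n\ge 0$; since $h$ permutes $X^{\ast}$, the map $v\mapsto v^h$ carries the $a$-orbit of $v$ bijectively onto the $b$-orbit of $v^h$, whence $|Orb_a(v)|=|Orb_b(v^h)|$. For the state identity I would raise the relation to the $m$-th power, $b^m=h^{-1}a^m h$, and apply the rule $(h^{-1}gh)|_{(v)h}=(h|_v)^{-1}g|_v\,h|_{(v)g}$ with $g=a^m$; since $m=|Orb_a(v)|$ forces $(v)a^m=v$, the last factor is $h|_v$ and one reads off $(h|_v)^{-1}a^m|_v\,h|_v=b^m|_{v^h}$.

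For Part~2, I would look for a conjugator of the form $h=(h_1,\ldots,h_d)\pi$ with $\pi\in\CPi(a,b)$ the permutation furnished by the hypothesis. Then the first-level permutation of $h^{-1}ah$ is $\pi^{-1}\pi_a\pi=\pi_b$, so it remains only to match the first-level states; using $(h^{-1}ah)|_{v^\pi}=h_v^{-1}(a|_v)h_{v^{\pi_a}}$, this is the system
\[
h_v^{-1}\,(a|_v)\,h_{v^{\pi_a}}=b|_{v^\pi}\qquad(v\in X)
\]
to be solved for the automorphisms $h_v$. I would decompose $X$ into the $\pi_a$-orbits $\orb_1,\ldots,\orb_k$; since $(v^\pi)^{\pi_b}=(v^{\pi_a})^\pi$, the permutation $\pi$ carries each $\orb_i$ onto a $\pi_b$-orbit compatibly with the cyclic order. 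Fixing $v_0\in\orb_i$ and writing $\orb_i=\{v_0,v_1,\ldots,v_{m-1}\}$ with $v_{j+1}=v_j^{\pi_a}$ and $m=|\orb_i|$, the equations on $\orb_i$ form a chain: choosing $h_{v_0}$ freely, the equation with index $j$ determines $h_{v_{j+1}}=(a|_{v_j})^{-1}h_{v_j}(b|_{v_j^\pi})$ for $j=0,\ldots,m-2$, and unwinding gives
\[
h_{v_j}=(a|_{v_0}\cdots a|_{v_{j-1}})^{-1}\,h_{v_0}\,(b|_{v_0^\pi}\cdots b|_{v_{j-1}^\pi}).
\]
The single remaining equation, with $j=m-1$, closes the cycle; substituting the last display and recognizing the resulting cyclic products as states through the iterated-state formula $g^m|_v=g|_v\,g|_{(v)g}\cdots g|_{(v)g^{m-1}}$, it becomes
\[
h_{v_0}^{-1}\,(a^m|_{v_0})\,h_{v_0}=b^m|_{v_0^\pi},
\]
i.e. $h_{v_0}$ is a conjugator for the pair $(a^{|\orb_i|}|_{v_0},\,b^{|\orb_i|}|_{v_0^\pi})$, which exists by hypothesis.

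To conclude, I would pick such an $h_{v_0}$ for each $i$, propagate around each orbit to define all the $h_v$, and hence $h$; by construction $h^{-1}ah$ and $b$ have the same first-level permutation and the same first-level states, so $h^{-1}ah=b$. The argument is largely bookkeeping, and the step I expect to need the most care is lining up the two cyclic orderings so as to identify $a|_{v_0}\cdots a|_{v_{m-1}}$ with $a^m|_{v_0}$ and $b|_{v_0^\pi}\cdots b|_{v_{m-1}^\pi}$ with $b^m|_{v_0^\pi}$; this rests on the iterated-state formula together with the first-level identities $(v_0)a=v_0^{\pi_a}$ and $(v_0^\pi)b=v_1^\pi$, after which the reduction to the hypothesis is immediate.
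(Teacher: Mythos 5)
Your proposal is correct and follows essentially the same route as the paper: Part~1 via $h^{-1}a^mh=b^m$ and $(v)a^m=v$, and Part~2 by propagating $h_{v_{j+1}}=(a|_{v_j})^{-1}h_{v_j}b|_{v_j^\pi}$ around each first-level orbit, which is exactly the paper's formula $h|_{v_i}=(a^i|_v)^{-1}h|_v\,b^i|_u$ and its basic-conjugator construction, with the closing condition reducing to the hypothesized conjugacy of $a^m|_{v_0}$ and $b^m|_{v_0^\pi}$. The only difference is expository: the paper writes out the chain of equations in the forward direction and leaves the reversal implicit, whereas you make the converse construction explicit.
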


\begin{proof}
The first statement follows from the equalities $h^{-1}a^mh=b^m$, $(v){a^m}=v$.

Let $Orb_a(v)=\{v_0=v, v_1,\ldots, v_{m-1}\}$, where $v_i=(v)a^i$. Put $u=v^h$, then $Orb_b(u)=\{u_0=u, u_1,\ldots,
u_{m-1}\}$, where $u_i=(u)b^i$ and $u_i=v_i^h$. Then
\begin{eqnarray*}
b|_{u_0} &=& (h^{-1}ah)|_{u_0} = (h|_{v_0})^{-1} a|_{v_0} h|_{v_1} \\
b|_{u_1} &=& (h^{-1}ah)|_{u_1} = (h|_{v_1})^{-1} a|_{v_1} h|_{v_2} \\
\ldots & &\ldots\\
b|_{u_{m-1}} &=& (h^{-1}ah)|_{u_{m-1}} = (h|_{v_{m-1}})^{-1} a|_{v_{m-1}} h|_{v_0}
\end{eqnarray*}
Multiplying these equations, we get
\begin{align*}
(h|_v)^{-1} a^m|_v h|_v&=(h|_{v_0})^{-1} \left(a|_{v_0} a|_{v_1}\ldots a|_{v_{m-1}}\right) h|_{v_0}=\\
&=b|_{u_0} b|_{u_1}\ldots b|_{u_{m-1}}=b^m|_{u}.
\end{align*}
In particular
\begin{align}
(h|_{v_i})^{-1} a^m|_{v_i} h|_{v_i}&=(h|_{v_i})^{-1} \left(a|_{v_i} a|_{v_{i+1}}\ldots a|_{v_{i-1}}\right) h|_{v_i}=\nonumber\\
&=b|_{u_i} b|_{u_{i+1}}\ldots b|_{u_{i-1}}=b^m|_{u_i},\nonumber\\
h|_{v_i}=\left(a|_{v_0}\ldots a|_{v_{i-2}}a|_{v_{i-1}}\right)^{-1}&h|_{v_0}\left(b|_{u_0}\ldots
b|_{u_{i-2}}b|_{u_{i-1}}\right)=(a^i|_{v})^{-1} h|_v b^i|_{u}.\label{eqn_in_Conjugacy_Lemma}
\end{align}
\end{proof}

If $a$ and $b$ are finite-state automorphisms \emph{(we need this only for the word problem)},
Lemma~\ref{lemma_conju_problem} suggests a branching decision procedure for the conjugacy problem in $\Aut(T)$. We call
this procedure by \textrm{CP} and remark that it may not stop in general.

\vspace{0.2cm}\textbf{The order problem in $\Aut(T)$}. The problem of finding the order of a given element
of $\Aut(T)$ can be handled in a manner similar to the above. The next observation gives a simple condition
used in many papers to prove that an automorphism has infinite order.

\begin{lemma}
\label{lemma_finite_order} Let $a\in\Aut(T)$.

\begin{enumerate}
\item Let $\mathcal{O}_{1},\mathcal{O}_{2},\ldots ,\mathcal{O}_{k}$ be the
orbits of the action of $a$ on $X$. Define $a_{i}=a^{m_{i}}|_{x_{i}}$ for every $i=1,2,\ldots ,k$, where
$m_{i}=|\mathcal{O}_{i}|$ and $x_{i}\in \mathcal{O}_{i}$ is an arbitrary point. The automorphism $a$ has finite order
if and only if all the states $a_{i}$ have finite order. Moreover, in this case, the order of $a$ is equal to
\begin{equation*}
|a|=\emph{lcm}(m_{1}|a_{1}|,m_{2}|a_{2}|,\ldots ,m_{k}|a_{k}|).
\end{equation*}

\item Suppose $a_{i}=a$ for some choice of $x_{i}\in \mathcal{O}_{i}$. If $m_{i}>1$ then $a$ has infinite order. If $m_{i}=1$ then $a$ has finite order if and only if $a_{j}$ has finite order
for all $j\neq i$, in which case we can remove the term $m_{i}|a_{i}|$ from the right hand side of the above equality.
\end{enumerate}
\end{lemma}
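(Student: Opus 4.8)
The plan is to reduce both parts to the formula $a^{n}|_{v}=a|_{v}\,a|_{(v)a}\cdots a|_{(v)a^{n-1}}$ from the preliminaries, evaluated at first-level vertices $v\in X$, together with the elementary fact that an automorphism of $T$ is trivial precisely when it fixes the first level and all of its first-level states are trivial.

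First I would prove part 1. Fix $n\geq 1$. Since $a$ acts on each orbit $\orb_{j}$ as an $m_{j}$-cycle, $a^{n}$ fixes the first level if and only if $m_{j}\mid n$ for every $j$. Assuming this, the vertices $(x_{j})a^{0},(x_{j})a^{1},\ldots,(x_{j})a^{n-1}$ cycle through $\orb_{j}$ with period $m_{j}$, so the product for $a^{n}|_{x_{j}}$ consists of $n/m_{j}$ repetitions of one block and equals $\bigl(a|_{x_{j}}a|_{(x_{j})a}\cdots a|_{(x_{j})a^{m_{j}-1}}\bigr)^{n/m_{j}}=\bigl(a^{m_{j}}|_{x_{j}}\bigr)^{n/m_{j}}=a_{j}^{\,n/m_{j}}$. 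The state of $a^{n}$ at any other point of $\orb_{j}$ is a conjugate of this one — directly from the computation rules, $a^{m_{j}}|_{(x_{j})a^{t}}=\bigl(a^{t}|_{x_{j}}\bigr)^{-1}\,a^{m_{j}}|_{x_{j}}\,a^{t}|_{x_{j}}$ — so $|a_{j}|$ does not depend on the choice of $x_{j}$. Hence $a^{n}=e$ if and only if, for every $j$, $m_{j}\mid n$ and $a_{j}^{n/m_{j}}=e$, equivalently $|a_{j}|$ is finite and $m_{j}|a_{j}|\mid n$. This yields both assertions of part 1, with $|a|=\lcm(m_{1}|a_{1}|,\ldots,m_{k}|a_{k}|)$.

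Then I would treat part 2, assuming $a_{i}=a$ for some $x_{i}\in\orb_{i}$. If $m_{i}>1$, then $a$ cannot have finite order: otherwise part 1 would force $m_{i}|a_{i}|=m_{i}|a|$ to divide $|a|$, which is absurd. If $m_{i}=1$, then $\orb_{i}=\{x_{i}\}$, $a$ fixes $x_{i}$, and $a|_{x_{i}}=a$; if $a$ has finite order then part 1 makes every $a_{j}$ of finite order, in particular those with $j\neq i$. Conversely, assume all $a_{j}$ with $j\neq i$ have finite order and let $L$ be the least common multiple of the numbers $m_{j}|a_{j}|$ with $j\neq i$. Since $m_{j}\mid L$ for $j\neq i$ and $m_{i}=1$, the automorphism $a^{L}$ fixes the first level; moreover $a^{L}|_{x_{j}}=a_{j}^{L/m_{j}}=e$ for $j\neq i$, while $a^{L}|_{x_{i}}=a^{L}$ because $a|_{x_{i}}=a$ and $(x_{i})a=x_{i}$. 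So $a^{L}$ fixes the first level, is trivial at every first-level vertex other than $x_{i}$, and reproduces itself at $x_{i}$; an induction on word length then gives $a^{L}=e$. Hence $a$ has finite order, $|a|\mid L$, and since $m_{j}|a_{j}|\mid|a|$ for $j\neq i$ by part 1 we also get $L\mid|a|$; thus $|a|=L$, which is exactly the asserted removal of the term $m_{i}|a_{i}|$.

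The only step I expect to need care is the $m_{i}=1$ case of part 2: one must observe that $a^{L}$ is not merely of finite order but literally the identity — an automorphism that fixes the first level, reproduces itself along a single ray and is trivial off it must equal $e$ — and this is precisely what lets the self-referential term $m_{i}|a_{i}|=|a|$ be dropped from the expression for $|a|$. The rest is a mechanical unwinding of the product formula for $a^{n}|_{v}$.
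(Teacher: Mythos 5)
Your proof is correct. Note that the paper itself gives no proof of this lemma at all --- it is presented as a ``simple condition used in many papers,'' so there is nothing to compare against; your write-up simply supplies the missing standard argument. The route you take is the natural one: the product formula $a^{n}|_{v}=a|_{v}\,a|_{(v)a}\cdots a|_{(v)a^{n-1}}$ collapses on each orbit to $a^{n}|_{x_{j}}=a_{j}^{\,n/m_{j}}$ once $m_{j}\mid n$, the states at the other points of an orbit are conjugates of this, and the triviality criterion for $a^{n}$ then gives $|a|=\lcm(m_{j}|a_{j}|)$ directly. You also correctly isolate the one step that genuinely needs an argument in part~2: that an automorphism $g$ fixing the first level, trivial at every first-level vertex except $x_{i}$, and satisfying $g|_{x_{i}}=g$, must equal the identity (your induction on word length), which is exactly what lets the self-referential term $m_{i}|a_{i}|$ be dropped. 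No gaps.
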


If $a$ is a finite-state automorphism, then the word problem $a_{i}=a$ can be effectively solved and
Lemma~\ref{lemma_finite_order} suggests a branching procedure to find the order of a. We call this procedure by OP and
remark that it may not stop in general. Such a procedure is implemented in the program packages \cite{FR,AutomGrp}.

\vspace{0.2cm} \textbf{Orbit-signalizer.} Lemmas \ref{lemma_conju_problem} and \ref{lemma_finite_order}
lead us to define the \textit{orbit-signalizer} of an automorphism $a\in\Aut(T)$ as the set
\begin{equation*}
\OS(a)=\left\{ a^{m}|_{v}\mid v\in X^{\ast},\text{ } m=|Orb_{a}(v)|\right\},
\end{equation*}
which contains all automorphisms that may appear in the procedures \textrm{OP} and \textrm{CP}. Notice that
if $m=|Orb_{a}(v)|$, $l=|Orb_{a}(vx)|$, and $k=|Orb_{a^m|_v}(x)|$ then $l=mk$ and
\begin{equation}\label{eqn_state_of_a_state_in_orbit_sign}
a^l|_{vx}=\left( a^m|_v \right)^k|_x.
\end{equation}
This observation implies the recursive procedure to find the set $\OS(a)$. We start from the set
$\OS_{0}(a)=\{a\}$ and compute consecutively
\[
\OS_{n+1}(a)=\left\{ b^m|_x\mid b\in \OS_n(a), x\in X, m=|Orb_b(x)| \right\}.
\]
Then $\OS(a)=\cup_{n\geq 0} \OS_n(a)$. It follows from construction that if $\OS_{n+1}(a)$ does not contain
new elements, i.e., $\OS_{n+1}(a)\subset \cup_{i=0}^n\OS_i(n)$, then we can stop and
$\OS(a)=\cup_{i=0}^n\OS_i(n)$. In particular, if the set $\OS(a)$ is finite, then this procedure stops in
finite time and we can find $\OS(a)$ algorithmically. For automorphisms with finite orbit-signalizers one
can model the procedures OP and CP by finite graphs.

\vspace{0.2cm} \textbf{Order graph.} Consider an automorphism $a\in \Aut(T)$ which has finite
orbit-signalizer. We construct a finite graph $\Phi (a)$ with the set of vertices $\OS(a)$, called the
\textit{order graph} of $a$, which models the branching procedure OP. The edges of this graph are
constructed as follows. For every $b\in \OS(a)$ consider all orbits $\mathcal{O}_{1},\mathcal{O}_{2},\ldots
,\mathcal{O}_{k}$ of the action of $b$ on $X$ and let $x_{i}\in \mathcal{O}_{i}$ be the least element in
$\mathcal{O}_{i}$. It is easy to see that $b^{m_{i}}|_{x_{i}}\in \OS(a)$ for $m_{i}=|\mathcal{O}_{i}|$, and
we introduce the labeled edge $b\overset{m_{i}}{\longrightarrow}b^{m_{i}}|_{x_{i}}$ in the graph $\Phi (a)$
for every $i=1,\ldots ,k$. Then Lemma~\ref{lemma_finite_order} can be reformulated as follows.

\begin{proposition}
Let $a\in\Aut(T)$ have finite orbit-signalizer. Then $a$ has finite order if and only if all edges in the
directed cycles in the order graph $\Phi(a)$ are labeled by~$1$.
\end{proposition}

Moreover, in this case we can compute the order of $a$ using the graph $\Phi(a)$. Remove all the edges of
every directed cycle in $\Phi(a)$. Then the only dead vertex of $\Phi(a)$, i.e. the vertex without outgoing
edges, is the trivial automorphism, which has order $1$. Then inductively, for $b\in\OS(a)$ consider all
outgoing edges from $b$, and let $m_1,m_2,\ldots,m_k$ be the edge labels and $b_1,b_2,\ldots,b_k$ be the
corresponding end vertices, whose order we already know. Then by Lemma~\ref{lemma_finite_order} the order
of $b$ is equal to the least common multiple of $m_i|b_i|$. We illustrate the construction of the order
graph and the solution of the order problem in Example~\ref{ex_order_problem} of
Section~\ref{Section_Examples}.

\vspace{0.2cm}\textbf{Conjugator graph.} Consider automorphisms $a,b\in \Aut(T)$ both of which have finite
orbit-signalizers. We construct a finite graph $\Psi (a,b)$, called the \textit{conjugator graph} of the pair $(a,b)$,
modeled after the branching procedure CP of Lemma~\ref{lemma_conju_problem}. The vertices of the graph $\Psi (a,b)$ are
the triples $(c,d,\pi)$ for $c\in\OS(a)$, $d\in \OS(b)$, and $\pi\in\CPi(c,d)$ whenever this last set is nonempty. The
edges are constructed as follows.

Let $\mathcal{O}_{i}\left( c\right) $ for $1\leq i\leq k$ be the orbits of $c $ in its action on $X$ and let
$x_{i}\left( c\right) $ denote the least element in each $\mathcal{O}_{i}\left( c\right) $. We will simplify the
notation by writing instead $\mathcal{O}_{i}$ and $x_{i}$ with the understanding that these refer to $c\in \OS(a)$
under consideration.

For any vertex $(c,d,\pi)$, if one of the sets $\CPi (c^{m}|_{x_{i}},d^{m}|_{{x_{i}}^{\pi}})$ with
$m=|\mathcal{O}_{i}|$ is empty, then the triple $(c,d,\pi)$ is a dead vertex. 
Otherwise we introduce in the graph the edge
\begin{equation*}
(c,d,\pi)\overset{x_{i}}{\longrightarrow}(c^{m}|_{x_{i}},d^{m}|_{{x_{i}}^{\pi}},\tau)\ \mbox{ with }
m=|\mathcal{O}_{i}|
\end{equation*}
for every $\tau \in \CPi (c^{m}|_{x_{i}},d^{m}|_{{x_{i}}^{\pi}})$ and $i=1,\ldots ,k$. Notice that $c^{m}|_{x_{i}}\in
\OS(a)$, $d^{m}|_{x_{i}^{\pi}}\in \OS(b)$, and hence the triple $(c^{m}|_{x_{i}},d^{m}|_{{x_{i}}^{\pi}},\tau)$ is
indeed a vertex of the graph.

We simplify the graph obtained above using the following reductions. Remove the vertex $(c,d,\pi)$ which does not have
an outgoing edge labeled by $x_{i}$ for some $i$. Also, remove all edges leading to these deleted vertices. We repeat
the reductions as long as possible to reach the graph $\Psi (a,b)$.

If the graph $\Psi (a,b)$ is empty, then the automorphisms $a$ and $b$ are not conjugate. Otherwise they
are conjugate and every conjugator $h:a\rightarrow b$ can be constructed level by level as follows. Choose
any vertex $(a,b,\pi)$ in $\Psi (a,b)$ and define the action of $h$ on the first level by $x^{h}=x^{\pi}$
for $x\in X$. There is an outgoing edge from $(a,b,\pi)$ labeled by $x_{i}=x_{i}(a)$, as explained
previously. Choose an edge for every $x_{i}$ and let $(c_{i},d_{i},\pi_{i})$ be the corresponding end
vertex. We define the action of the state $h|_{x_{i}}$ by the rule $\left( x\right)
^{h|_{x_{i}}}=x^{\pi_{i}}$ for $x\in X$. All the other states of $h$ on the vertices of the first level are
uniquely defined by Equation~(\ref{eqn_in_Conjugacy_Lemma}) at the end of the proof of
Lemma~\ref{lemma_conju_problem}, and thus we get the action of $h$ on the second level. Similarly, we
proceed further with the vertices $(c_{i},d_{i},\pi_{i})$ and construct the action of $h$ on the third
level, and so on. Notice that even if the same vertex $(c,d,\pi)$ appears at different stages of the
definition of $h$ we still have a freedom to choose different outgoing edges from $(c,d,\pi )$ in each
stage of the construction.

\vspace{0.2cm}\textbf{Basic conjugators.} Let us construct certain conjugators, called \textit{basic conjugators} for
the pair $(a,b)$, by making as few choices as possible, in the sense that if we arrive at a triple $(c,d,\pi)$ at some
stage of the construction then we choose the same permutation $\pi \in \CPi (c,d)$ whenever the pair $(c,d)$ reappears
further down. That is, for every two vertices $(c,d,\pi_{1})$ and $(c,d,\pi_{2})$ obtained under construction we insist
to have $\pi_{1}=\pi_{2}$. More precisely every basic conjugator can be defined using special subgraphs of the
conjugator graph $\Psi (a,b)$. Consider the subgraph $\Gamma $ of $\Psi (a,b)$, which satisfies the following
properties:

\begin{enumerate}
\item The subgraph $\Gamma $ contains some vertex $(a,b,\pi)$ for $\pi \in
\CPi (a,b)$.

\item For every vertex $(c,d,\pi)$ of $\Gamma$ and every letter $x_{i}$,
there exists precisely one outgoing edge from $(c,d,\pi)$ labeled by $x_{i}$. In particular, the graph is
deterministic, and the number of outgoing edges at the vertex $(c,d,\pi)$ of the graph $\Gamma $ is equal to the number
of orbits of $c$ on $X$.

\item For every $c\in \OS(a)$ and $d\in \OS(b)$ there is at most
one vertex of the form $(c,d,\ast)$ in the graph $\Gamma$. In other words, if $(c,d,\pi_{1})$ and $(c,d,\pi_{2})$ are
vertices of $\Gamma $ then $\pi_{1}=\pi_{2}$.
\end{enumerate}

If the graph $\Psi (a,b)$ is nonempty, there always exist subgraphs of $\Psi (a,b)$, which satisfy the properties 1-3.
For every such a subgraph $\Gamma $ we construct the basic conjugator $h=h(\Gamma)$ as follows. We construct a
functionally recursive system involving every conjugator $h_{(c,d)}:c\rightarrow d$, where $(c,d,\pi)$ is a vertex of
$\Gamma $ and thus in particular, we construct $h=h_{(a,b)}$. First, we define the action of the conjugator $h_{(c,d)}$
on the first level by the rule $x^{h_{(c,d)}}=x^{\pi} $ for $x\in X$, where the permutation $\pi \in \CPi (c,d)$ is
uniquely defined such that the triple $(c,d,\pi)$ is a vertex of $\Gamma $. For every edge $(c,d,\pi
)\overset{x}{\longrightarrow}(c^{\prime},d^{\prime},\pi ^{\prime})$ we define the states of the conjugator $h_{(c,d)}$
on the letters from the orbit $\mathcal{O}=\{x,(x)c,(x)c^{2},\ldots ,(x)c^{m-1}\} $ of $x$ under $c$ recursively by the
rule
\begin{equation*}
h_{(c,d)}|_{x}=h_{(c^{\prime},d^{\prime})}\quad \mbox{ and }\quad h_{(c,d)}|_{(x)c^{i}}=\left( c^{i}|_{x}\right)
^{-1}\cdot h_{(c^{\prime},d^{\prime})}\cdot d^{i}|_{x^{\pi}},
\end{equation*}
for $i=1,\ldots ,m-1$. These rules completely define the automorphisms $h_{(c,d)}$. By Lemma~\ref{lemma_conju_problem}
every constructed automorphism $h_{(c,d)}$ is indeed a conjugator for the pair $(c,d)$. Since the graph $\Gamma $ is
finite, and the automorphisms $a,b$ are finite-state, we get a functionally recursive system which uniquely defines the
basic conjugator $h=h_{(a,b)}$ given by the subgraph $\Gamma$.

We have proved the following theorem.

\begin{theorem}
\label{thm_conj_fso_conj_gr} Let $a,b\in \FSG$ have finite orbit-signalizers, and let $\Psi (a,b)$ be the corresponding
conjugator graph. Then $a$ and $b$ are conjugate in $\Aut(T)$ if and only if they are conjugate in $\FRG$ if and only
if the graph $\Psi (a,b)$ is nonempty.
\end{theorem}

In particular, the conjugacy problem for finite-state automorphisms with finite orbit-signalizers is
decidable in the groups $\Aut(T)$ and $\FRG$. We present examples of the construction of the conjugator
graph and basic conjugators in Example~\ref{ex_conjugacy_problem} of Section~\ref{Section_Examples}.

The same method works for the simultaneous conjugacy problem, which given automorphisms $a_{1},a_{2},\ldots ,a_{k}$ and
$b_{1},b_{2},\ldots ,b_{k}$ asks for the existence of an automorphism $h$ such that $h^{-1}a_{i}h=b_{i}$ for all $i$.
We again consider the permutations $\pi $ such that $\pi ^{-1}\pi_{a_{i}}\pi =\pi_{b_{i}}$ for all $i$, take an orbit
$\mathcal{O}_{i}$ of $a_{i}$ on $X$, let $x_{i}\in \mathcal{O}_{i}$ be the least element and $m_{i}=|\mathcal{O}_{i}|$.
Then the problem reduces to the simultaneous conjugacy problem for $a^{m_{1}}|_{x_{1}},a^{m_{2}}|_{x_{2}},\ldots
,a^{m_{k}}|_{x_{k}}$ and $b^{m_{1}}|_{x_{1}^{\pi}},b^{m_{2}}|_{x_{2}^{\pi}},\ldots ,b^{m_{k}}|_{x_{k}^{\pi}}$. If
automorphisms $a_{i}$ and $b_{i}$ are finite-state and have finite orbit-signalizers, then we can similarly construct
the associated conjugator graph so that Theorem~\ref{thm_conj_fso_conj_gr} holds.

\vspace{0.2cm} \textbf{Conjugation of contracting automorphisms.} A self-similar subgroup $G<\Aut(T)$ is
called \textit{contracting} if there exists a finite set $\mathcal{N}\subset G$ with the property that for
every $g\in G$ there exists $n\in\mathbb{N}$ such that $g|_v\in\mathcal{N}$ for all words $v$ of length
$\geq n$. The smallest set $\mathcal{N}$ with this property is called the \textit{nucleus} of the group. An
automorphisms $f\in\Aut(T)$ is called \textit{contracting} if the self-similar group generated by all
states of $f$ is contracting. It follows from the definition that contracting automorphisms are
finite-state.

\begin{theorem}
\label{theor_contr_conjug} Two contracting automorphisms $a,b\in\Aut(T)$ with finite orbit-signalizers are conjugate in
the group $\Aut(T)$ if and only if they are conjugate in the group $\FSG$.
\end{theorem}

\begin{proof}
We will prove that all the basic conjugators for the pair $(a,b)$ are finite-state. We need a few lemmata, which are
interesting in themselves.

\begin{lemma}\label{lemma_contr_finite_number_values}
Let $G$ be a contracting self-similar group, and let $H$ be a finite subset of $G$. Then the set of all possible
elements of the form
\begin{align*}
(\ldots ((h_1|_{x_1}\cdot h_2)|_{x_2}\cdot h_3)|_{x_3}\cdot\ldots\cdot h_n)|_{x_n},\quad (h_n\cdot\ldots
\cdot(h_3\cdot (h_2\cdot h_1|_{x_1})|_{x_2})|_{x_3}\ldots)|_{x_n},
\end{align*}
where $h_i\in H$ and $x_i\in X$, is finite.
\end{lemma}
\begin{proof}
The statement is a reformulation of Proposition~2.11.5 in \cite{self_sim_groups}. We sketch the proof for completeness.

We can assume that the set $H$ is self-similar, i.e. $h|_v\in H$ for all $h\in H$ and $v\in X^{*}$ (all the elements
are finite-state), and contains the nucleus $\nucl$ of the group $G$. There exists a number $k$ such that
$H^2|_v\subset\nucl\subset H$ for all words $v$ of length $\geq k$. Then $H^{2n}|_v\subset H^n$ for all $v\in X^k$ and
$n\geq 1$. It is sufficient to prove that there are finitely many elements of the form
\[
(\ldots ((h_1|_{v_1}\cdot h_2)|_{v_2}\cdot h_3)|_{v_3}\cdot\ldots\cdot h_n)|_{v_n}
\]
for $h_i\in H^k$ and $v_i\in X^k$. Then $h_1|_{v_1}\in H^k$ and $(h_1|_{v_1}\cdot h_2)|_{v_2}\in H^{2k}|_{v_2}\subset
H^k$. Inductively we get that all the above elements belong to $H^k$.
\end{proof}

The next lemma is similar to Corollary~2.11.7 in \cite{self_sim_groups}, which states that different self-similar
contracting actions with the same virtual endomorphism are conjugate via a finite-state automorphism.

\begin{lemma}\label{lemma_h_wreath_rec_finite_state}
Let $G$ be a contracting self-similar group and $H\subset G$ be a finite subset. Suppose that the
automorphism $g\in\Aut(T)$ satisfies the condition that for every $x\in X$ there exist $h,h'\in H$ such
that $g|_x=hgh'$. Then $g$ is finite-state.
\end{lemma}
\begin{proof}
For an arbitrary word $x_0x_1x_2\ldots x_n\in  X^{*}$ we have
\begin{align}\label{eqn_g|_v_in_lemma_finite_state}
g|_{x_0x_1x_2\ldots x_n}&=(h_1 g h'_1)|_{x_1x_2\ldots x_n}=\bigl((((h_1|_{x_1}\cdot h_2)|_{x_2}\cdot
h_3)|_{x_3}\cdot\ldots\cdot h_n)|_{x_n}\cdot h_{n+1}\bigr)\cdot g \cdot\nonumber\\
&\quad\cdot \bigl(h'_{n+1}\cdot(h'_n\cdot\ldots \cdot(h'_3\cdot (h'_2\cdot
h'_1|_{y_1})|_{y_2})|_{y_3}\ldots)|_{y_n}\bigr),
\end{align}
where $h_i,h'_i$ are some elements in $H$, and $y_i\in X$. By Lemma~\ref{lemma_contr_finite_number_values} the above
products assume a finite number of values, and hence $g$ is finite-state.
\end{proof}

Notice that in the previous lemma we only need that the groups generated by all states of $h_i$ and
separately by all states of $h'_i$ be contracting, while together they may generate a non-contracting
group.

\begin{lemma}\label{lemma_system_wreath_rec_finite_state}
Let $F\subset \Aut(T)$ be a finite collection of automorphisms. Suppose that there exist two contracting self-similar
groups $G_1$, $G_2$ and finite subsets $H_1\subset G_1$, $H_2\subset G_2$ with the condition that for every $g\in F$
and every letter $x\in X$ there exist $h_1\in H_1$, $h_2\in H_2$ and $g'\in F$ such that $g|_{x}=h_1g'h_2$. Then all
the automorphisms in $F$ are finite-state.
\end{lemma}
\begin{proof}
The proof is the same as in Lemma~\ref{lemma_h_wreath_rec_finite_state}. The only difference is that on the right hand
side of Equation~(\ref{eqn_g|_v_in_lemma_finite_state}) instead of $g$ may appear any element of the finite set $F$.
\end{proof}

To finish the proof of Theorem~\ref{theor_contr_conjug} it is sufficient to notice that all the basic
conjugators for the pair $(a,b)$ satisfy Lemma~\ref{lemma_system_wreath_rec_finite_state}, and hence all of
them are finite-state.
\end{proof}

Example~\ref{ex_contr_finiteOS_counter_ex} of Section~\ref{Section_Examples} shows that we cannot drop the assumption
about orbit-signalizers in the theorem.

The finiteness of orbit-signalizers can be used to prove that certain automorphisms are not conjugate in the group
$\FSG$.

\begin{proposition}
\label{prop_fso_necessary} Let $f,g\in \Aut(T)$ be conjugate in $\FSG$. Then $f$ has finite orbit-signalizer if and
only if $g$ does.
\end{proposition}

\begin{proof}
Let $h^{-1}fh=g$ for a finite-state automorphism $h$, and suppose $f$ has finite orbit-signalizer. Then
$m=|Orb_f(v)|=|Orb_g(v^h)|$ for every $v\in X^{*}$ and
\[
g^m|_{v^h}=(h|_v)^{-1}f^m|_vh|_v\in (h|_v)^{-1}\OS(f)h|_v. 
\]
It follows that $\OS(g)\subset \Q(h)^{-1} \OS(f) \Q(h)$, where $\Q(h)$ is the set of states of $h$, and
hence the set $\OS(g)$ is finite.
\end{proof}

\section{Conjugation of bounded automorphisms}

How to check that a given finite-state automorphism has finite orbit-signalizer is yet another algorithmic
problem. Let us show that some classes of automorphisms have finite orbit-signalizers. Every finite-state
automorphism $a$ of finite order has finite orbit-signalizer. Here the set $\OS(a)$ is bounded by the
number of all states of all powers of $a$, which is finite. In particular, if a finite-state automorphism
has infinite orbit-signalizer, then it has infinite order.

\begin{proposition}
\label{prop_bound_fso_condition} Every bounded automorphism has finite
orbit-signalizer.
\end{proposition}

\begin{proof}
Let $a$ be a bounded automorphism, and choose a constant $C$ so that the number of non-trivial states
$a|_v$ for $v\in X^n$ is not greater than $C$ for every $n\geq 0$. Then for every vertex $v\in X^{*}$ the
state $a^m|_v$ with $m=|Orb_a(v)|$ is a product of no more than $C$ states of $a$, which is a finite set.
\end{proof}

In particular, the orbit-signalizer of a bounded automorphism can be computed algorithmically, the
procedure \textrm{CP} solves the conjugacy problem for bounded automorphism in $\Aut(T)$, and the procedure
\textrm{OP} finds the order of a bounded automorphism.

\begin{corollary}
(1). The order problem for bounded automorphisms is decidable.\newline (2). The (simultaneous) conjugacy problem for
bounded automorphisms in $\Aut(T)$ is decidable.
\end{corollary}

\begin{theorem}
\label{thm_bounded_conju_in_Aut_f} Two bounded automorphisms are conjugate in the group $\Aut(T)$ if and only if they
are conjugate in the group $\FSG$.
\end{theorem}

\begin{proof}
The bounded automorphisms are contracting by \cite{bondnek:pcf} and have finite orbit-signalizers by
Proposition~\ref{prop_bound_fso_condition}, hence we can apply Theorem~\ref{theor_contr_conjug}.
\end{proof}

\begin{corollary}
\label{cor_bounded_conju_inverse} Let $a$ be a bounded automorphism. Then $a$ and $a^{-1}$ are conjugate in $\FSG$.
\end{corollary}

\begin{corollary}
Let a bounded automorphism $f$ and a contracting automorphism $g$ be conjugate in $\Aut(T)$. Then $f$ and $g$ are
conjugate in $\FSG$ if and only if $g$ has finite orbit-signalizer.
\end{corollary}

\subsection{Decision of conjugation between bounded automorphisms by a finitary automorphism}

Consider the conjugacy problem for bounded automorphisms in the group $\mathrm{Pol}({-1})$ of finitary
automorphisms. One of the approaches is to restrict the depth of a possible finitary conjugator. Let $a,b$
be two bounded automorphisms. If $a$ and $b$ are conjugate in $\mathrm{Pol}({-1})$ and $h:a\rightarrow b$
is a finitary conjugator then every state $h|_{x}$ for $x\in X$ is a finitary conjugator for the pair
$(a^{m}|_{x},b^{m}|_{x^{h}})$ with $m=|Orb_{a}(x)|$, and $h|_x$ has less depth than $h$. However, it is
possible that every pair $(a^{m}|_{x},b^{m}|_{x^{h}})$ for $x\in X$ with $m=|Orb_{a}(x)|$ is conjugate via
a finitary conjugator of depth $\leq d$, while $(a,b)$ is not conjugate via a finitary conjugator of depth
$\leq d+1$. Hence we still do not get a bound on the depth of $h$ even if we know the bound on the depth of
a finitary conjugator for every pair $(a^{m}|_{x},b^{m}|_{x^{h}})$. The problem is that we need to find a
finitary conjugator $h|_{x}$ for the pair $(a^{m}|_{x},b^{m}|_{x^{h}})$ so that all elements
$h|_{(x)a^{i}}=(a^{i}|_{x})^{-1}\cdot h|_{x}\cdot b^{i}|_{x^{h}}$ for $i=0,\ldots ,m-1$ are finitary. To
overcome this difficulty we introduce configurations of orbits, which describe these dependencies.

\vspace{0.2cm}\textbf{Configurations of orbits.} Every configuration will be of the form
$\mathcal{C}=\{(\alpha,\beta),DP_{\mathcal{C}}\}$, where $(\alpha,\beta)$ is a pair of automorphism called
the \textit{main pair} of $\mathcal{C}$, and $DP_{\mathcal{C}}$ is the set of pairs of automorphism called
\textit{dependent pairs}. Configurations for the pair $(a,b)$ are constructed recursively as follows. At
zero level we have just one configuration $\Lambda_0=\{\mathcal{C}\}$, where
$\mathcal{C}=\{(a,b),DP_{\mathcal{C}}=\{(e,e)\}\}$. Further we construct recursively the set
$\Lambda_{n+1}$ from the set $\Lambda_n$. Take a configuration $\mathcal{C}\in\Lambda_n$,
$\mathcal{C}=\{(\alpha,\beta), DP_{\mathcal{C}}\}$. Consider every orbit $\mathcal{O}$ of the action of
$\alpha$ on $X$, let $x$ be the least element in $\mathcal{O}$ and $m=|\mathcal{O}|$. For every
$\pi\in\CPi(\alpha,\beta)$ define new configuration $\mathcal{C}^{\prime}=\mathcal{C}^{\prime}_{x,\pi}=\{
(\alpha^m|_x, \beta^m|_{x^{\pi}}), DP_{\mathcal{C}^{\prime}}\}$, where
\begin{eqnarray}\label{eqn_construction_configurations}
DP_{\mathcal{C}^{\prime}}=\{ ( (\alpha^i c)|_x, (\beta^i d)|_{x^{\pi}})\, |\, (c,d)\in DP_{\mathcal{C}}
\mbox{ and } i=0,\ldots,m-1\}.
\end{eqnarray}
The set $\Lambda_{n+1}$ consists of all configurations $\mathcal{C}^{\prime}$ constructed in this way. Then
$\Lambda=\cup_{n\geq 0} \Lambda_n$ is the set of \textit{configurations} for $(a,b)$. It follows from
construction that if $\Lambda_{n+1}$ does not contain new configurations, i.e.,
$\Lambda_{n+1}\subset\cup_{i=0}^n\Lambda_i$, then we can stop and $\Lambda=\cup_{i=0}^n\Lambda_i$. In
particular, if the set $\Lambda$ is finite then it can be computed algorithmically.

\begin{lemma}
The set of configurations for a pair of bounded automorphisms is finite and can be computed
algorithmically.
\end{lemma}
\begin{proof}

Let $\mathcal{C}=\{(\alpha,\beta), DP_{\mathcal{C}}\}$ be a configuration for $(a,b)$ and denote
$A_{\mathcal{C}}=\{c\, |\, (c,d)\in DP_{\mathcal{C}}\}$. We prove by induction that there exists a word
$v\in X^{*}$ such that
\begin{equation}\label{eqn_lemma_config_and_v}
\alpha=a^l|_v\,\mbox{ and }\, A_{\mathcal{C}}=\{ a^j|_v\, |\, j=0,1,\ldots, l-1\},
\end{equation}
where $l=|Orb_a(v)|$. The basis of induction is the initial configuration
$\mathcal{C}=\{(a,b),DP_{\mathcal{C}}=\{(e,e)\}\}$ that satisfies this condition for the empty
word~$v=\emptyset$. Suppose inductively that a configuration $\mathcal{C}$ satisfies the condition for a
word $v$ and proceed with the construction of $\mathcal{C}^{\prime}$. Let $x$ be the least element in an
orbit of the action of $\alpha$ on $X$ and put $m=|Orb_{\alpha}(x)|$. Then $|Orb_{a}(vx)|=lm$ and we get
\begin{equation*}
\alpha^m|_x=(a^l|_v)^m|_x=a^{lm}|_{vx} \quad \mbox{ and }\vspace{-0.5cm}
\end{equation*}
\begin{eqnarray*}
A_{\mathcal{C}^{\prime}}&=&\{ (\alpha^ic)|_x\, |\, c\in A_{\mathcal{C}} \mbox{ and } i=0,1,\ldots,m-1 \}=\\
&=&\{ ((a^l|_v)^ia^j|_v)|_x\, |\, i=0,1,\ldots,m-1 \mbox{ and } j=0,1,\ldots,l-1\}=\\
&=&\{a^k|_{vx}\, |\, k=0,1,\ldots,lm-1\} \ (\mbox{here } k=li+j).
\end{eqnarray*}
Hence $\mathcal{C}^{\prime}$ satisfies condition (\ref{eqn_lemma_config_and_v}) for the word $vx$.

In particular, $\alpha\in\OS(a)$ and can assume only a finite number of values. The number of different
sets $\{a^{j}|_{v}\, |\, 0\leq j<|Orb_{a}(v)|\}$, $v\in X^{\ast}$, for a bounded automorphism $a$, is
finite (the proof is the same as of Proposition~\ref{prop_bound_fso_condition}). Hence there are finitely
many possibilities for the set $A_{\mathcal{C}}$. The same observation holds for $\beta$ and the set
$B_{\mathcal{C}}=\{d\, |\, (c,d)\in DP_{\mathcal{C}} \}$. It follows that the number of configurations for
a pair of bounded automorphisms is finite.
\end{proof}

\begin{remark}\label{rem_configuration_for_orbit}
Let $h^{-1}ah=b$. Let $\mathcal{O}$ be an orbit of the action of $a$ on $X^{*}$ and $v\in\mathcal{O}$ be the least element in the orbit and $m=|\mathcal{O}|$. Then $\mathcal{C}=\{(a^{m}|_{v},b^{m}|_{v^{h}}), DP_{\mathcal{C}}\}$, where
\begin{equation*}
DP_{\mathcal{C}}=\{(a^{i}|_{v},b^{i}|_{v^{h}}),\mbox{ for } i=0,\ldots ,m-1\},
\end{equation*}
is a configuration for $(a,b)$.
\end{remark}

\textbf{Configurations satisfied by a finitary automorphism.} We say that a finitary automorphism $h$
\textit{satisfies a configuration} $\mathcal{C}$ if $h$ is a conjugator for the main pair of $\mathcal{C}$
and all elements $c^{-1}hd$ for $(c,d)\in DP_{\mathcal{C}}$ are finitary. We say that a configuration
$\mathcal{C}$ \textit{has depth} $\leq d$ if $\mathcal{C}$ is satisfied by a finitary automorphism $h$ of
depth $\leq d$ and all elements $c^{-1}hd$ for $(c,d)\in DP_{\mathcal{C}}$ have depth $\leq d$. In
particular, the automorphisms $a, b$ are conjugate in $\mathrm{Pol}({-1})$ if and only if the configuration
$\mathcal{C}=\{(a,b), DP_{\mathcal{C}}=\{(1,1)\}\}$ is satisfied by a finitary automorphism. Let us show
that it is decidable whether a given configuration $\mathcal{C}$ can be satisfied by a finitary
automorphism.

\begin{lemma}\label{lemma_configur_conju_depth}
Let $\mathcal{C}=\{(\alpha,\beta), DP_{\mathcal{C}}\}$ be a configuration. Consider all orbits
$\mathcal{O}_1,\mathcal{O}_2,\ldots,\mathcal{O}_k$ of the action of $\alpha$ on $X$ and let
$x_j\in\mathcal{O}_j$ be the least element in $\mathcal{O}_j$. The configuration $\mathcal{C}$ has depth
$\leq d$ if and only if there exists $\pi\in\CPi(\alpha,\beta)$ such that every configuration
$\mathcal{C}'_{x_j,\pi}$, $j=1, \ldots,k$, has depth $\leq d-1$.
\end{lemma}
\begin{proof}
Suppose $h^{-1}\alpha h=\beta$ and all automorphisms $c^{-1}hd$, $(c,d)\in DP_{\mathcal{C}}$, are finitary
of depth $\leq d$. For the permutation $\pi$ in $\CPi(\alpha,\beta)$ we take $\pi_h$. Then
\[
(h|_{x_j})^{-1}\alpha^{m_j}|_{x_j}h|_{x_j}=\beta^{m_j}|_{x_j^{\pi}}
\]
and for every $(c,d)\in DP_{\mathcal{C}}$ and $i=0,1,\ldots,m-1$ we get
\begin{eqnarray}\label{eqn_lemma_state_of_depended_pair}
( (\alpha^i c)|_{x_j})^{-1} \cdot h|_{x_j}\cdot (\beta^i d)|_{{x_j}^{\pi}}=(c|_{(x_j)\alpha^i})^{-1}
h|_{(x_j)\alpha^i} d|_{(x_j)\alpha^i h} =(c^{-1}hd)|_y,
\end{eqnarray}
where $y=(x_j)\alpha^i c$. All automorphisms $(c^{-1}hd)|_y$ are finitary of depth $\leq d-1$. Hence every
configuration $\mathcal{C}'_{x_j,\pi}$ has depth $\leq d-1$.

Conversely, suppose there exists $\pi\in\CPi(\alpha,\beta)$ such that every $\mathcal{C}'_{x_j,\pi}$ is
satisfied by a finitary automorphism $h_j$. Define automorphism $h$ by the rules $\pi_h=\pi$ and
\[
h|_{x_j}=h_j \ \mbox{ and } \ h|_{(x_j)\alpha^i}=(\alpha^i|_{x_j})^{-1}  h|_j \beta^i|_{x_j^{\pi}}.
\]
Note that $(\alpha^i|_{x_j}, \beta^i|_{x_j^{\pi}})=( (\alpha^i c)|_{x_j}, (\beta^i d)|_{x_j^{\pi}})$ for
$(c,d)=(e,e)\in DP_{\mathcal{C}}$ and hence every pair $(\alpha^i|_{x_j}, \beta^i|_{x_j^{\pi}})$ belongs to
$DP_{\mathcal{C}^{\prime}}$. Therefore the automorphism $h$ is finitary. Also $h$ is a conjugator for
$(\alpha,\beta)$ by construction and satisfies the configuration $\mathcal{C}$ by
Equation~(\ref{eqn_lemma_state_of_depended_pair}).
\end{proof}

\begin{corollary}\label{cor_bound_conj_in_finitary}
If $a$ and $b$ are conjugate in $\mathrm{Pol}({-1})$ then there exists a finitary conjugator of depth $\leq
|\Lambda|$. In particular, the conjugacy problem for bounded automorphisms in the group
$\mathrm{Pol}({-1})$ is decidable.
\end{corollary}

Instead of just running through all finitary automorphisms with a given bound on the depth, the algorithm
can be realized as follows. Construct the set $\Lambda$ of all configuration for a given pair $(a,b)$. We
will consecutively label configurations by numbers which correspond to their depths. First, we identify
configurations of depth $0$, which are precisely configurations $\mathcal{C}=\{(\alpha,\beta),
DP_{\mathcal{C}}\}$ such that $\alpha=\beta$ and $c=d$ for all $(c,d)\in DP_{\mathcal{C}}$. Then
iteratively we label a configuration $\mathcal{C}=\{(\alpha,\beta), DP_{\mathcal{C}}\}$ by number $d$ if
there exists $\pi\in\CPi(\alpha,\beta)$ such that each $\mathcal{C}^{\prime}_{x_j,\pi}$ is already labeled
by a number $\leq d-1$. After this process finishes, the configurations labeled by $d$ can be satisfied by
a finitary automorphisms of depth $\leq d$, while the unlabeled configurations cannot be satisfied by
finitary automorphisms.

\subsection{The conjugacy problem in the group of bounded automata} In this subsection we prove that the
conjugacy problem in the group of bounded automata is decidable. We will show two approaches.

\textbf{First approach: by using cyclic structure of bounded automata.} Let $a$ and $b$ be bounded
automorphisms. We apply the following algorithm to check whether $a$ and $b$ are conjugate in $\pol[0]$.
The algorithm will consecutively determine the pairs from $\OS(a)\times\OS(b)$ that are conjugate in
$\pol[0]$. Further we prove that the algorithm is correct.

\textbf{Step 1}. Take $(c,d)\in\OS(a)\times\OS(b)$ and compute the set $\Lambda(c,d)$ of all configurations
for $(c,d)$. For every configuration $\mathcal{C}$ and every $(\gamma_1,\delta_1),(\gamma_2,\delta_2)\in
DP_{\mathcal{C}}$ check whether $(\gamma_1^{-1}\gamma_2)^{-1}c(\gamma_1^{-1}\gamma_2)$ and
$(\delta_1^{-1}\delta_2)^{-1}d(\delta_1^{-1}\delta_2)$ are conjugate in $\pol[-1]$, and if yes then $c,d$
are conjugate in $\pol[0]$. Apply this step to every pair $(c,d)\in\OS(a)\times\OS(b)$. Note that since
$\mathcal{C}=\{(c,d), DP_{\mathcal{C}}=\{(e,e)\}\}$ is a configuration for $(c,d)$ we also detect every
pair $(c,d)$ conjugated in $\pol[-1]$ (just take $\gamma_1=\gamma_2=\delta_1=\delta_2=e$).

\textbf{Step 2}. Take $(c,d)\in\OS(a)\times\OS(b)$. Consider all words $u\in X^{*}$ such that $u^c=u$ and
$|u|\leq |\OS(c)|\cdot|\OS(d)|$. Consider all circuit automorphisms $h$ such that $h|_u=h$ and every
finitary state of $h$ has depth $\leq|\Lambda(c,d)|$. Note that there are only finitely many bounded
automorphisms with these properties. For every such $h$ check whether $h^{-1}ch=d$. We apply this step to
every pair $(c,d)\in\OS(a)\times\OS(b)$ not detected in Step $1$.

\textbf{Step 3}. For every pair $(c,d)\in\OS(a)\times\OS(b)$ for which we still do not know whether it is
conjugate in $\pol[0]$ proceed as follows. Consider orbits $\orb_1,\ldots,\orb_k$ of the action of $c$ on
$X$, let $x_i\in\orb_i$ be the least element in the orbit and $m_i=|\orb_i|$. Check whether there exists
$\pi\in\CPi(c,d)$ such that for every pair $(c^{m_i}|_{x_i},d^{m_i}|_{x_i^{\pi}})$ (it belongs to
$\OS(a)\times\OS(b)$) we already know that it is conjugate in $\pol[0]$. If yes then $c,d$ are conjugate in
$\pol[0]$. We repeat this step as long as possible until no new pairs are detected. The other pairs from
$\OS(a)\times\OS(b)$ are not conjugate in $\pol[0]$.

\begin{proof}[Proof of correctness of the algorithm]

First, every pair detected in one of the steps is indeed conjugate in $\pol[0]$. We need to prove the
converse that if $a,b$ are conjugate in $\pol[0]$ then the pair $(a,b)$ will be detected. Let $h^{-1}ah=b$
for a bounded automorphism $h$. There exists a level $l$ such that for every $v\in X^{l}$ the state $h|_v$
is either circuit or finitary. Consider the orbits of the action of $a$ on $X^l$. Let $v$ be the least
element in an orbit $\orb$ and $m=|\orb|$. Then $h|_v$ is a conjugator for the pair
$(c,d)=(a^m|_v,b^m|_{v^h})\in \OS(a)\times\OS(b)$.

If $h|_v$ is finitary then the pair $(c,d)$ is detected in Step~$1$.

If $h|_v$ is circuit then we take a circuit conjugator $g$ for $(c,d)$ having a circuit of the shortest
length. Let $u$ be the word which is read along the circuit, so here $g|_{u}=g$. Now consider two cases.

Case 1: $u^c\neq u$. Then $g|_{u^c}=(c|_u)^{-1}g|_ud|_{u^g}$ is finitary. Since $g|_u=g$ we get $(g|_u)^{-1}c(g|_u)=d$ and
\[
(g|_{u^c})^{-1} \left((c|_u)^{-1} c c|_u\right) g|_{u^c}= (d|_{u^g})^{-1} d d|_{u^g}.
\]
Hence $(c|_u)^{-1} c c|_u$ and $(d|_{u^g})^{-1} d d|_{u^g}$ are conjugate in $\pol[-1]$. Let $w$ be the
least element in the orbit $\orb'=Orb_c(u)$ and $u=(w)c^i$. Let $\mathcal{C}$ be the configuration for
$(c,d)$ associated to the orbit $\orb'$ and the conjugator $g$ (see
Remark~\ref{rem_configuration_for_orbit}). Put $\gamma_1=c^i|_w$, $\gamma_2=c^{i+1}|_w$,
$\delta_1=d^i|_{w^g}$, $\delta_2=d^{i+1}|_{w^g}$ and note that $(\gamma_1,\delta_1), (\gamma_2,\delta_2)\in
DP_{\mathcal{C}}$. Then $c|_u=\gamma_1^{-1}\gamma_2$ and $d|_{u^g}=\delta_1^{-1}\delta_2$. Therefore the
pair $(c,d)$ is detected in Step~$1$.

Case 2: $u^c=u$. In this case the states of $g$ along the circuit do not have dependencies, and we have a
freedom to change these states without changing other states of the same level. Suppose there are two
states $g|_{v_{1}}$ and $g|_{v_{2}}$ along the circuit (let $|v_{1}|<|v_{2}|$ so $v_{1}$ is a prefix of
$v_{2}$), which solve the same conjugacy problem $(c|_{v_{1}},d|_{v_{1}^{g}})=(c|_{v_{2}},d|_{v_{2}^{g}})$.
Define the automorphism $f$ by the rules $f|_{v_{1}}=g|_{v_{2}}$, $f|_{v}=g|_{v}$ for $v\in
X^{|v_{1}|},v\neq v_{1}$, and the action of $f$ on $X^{|v_{1}|}$ is the same as the action of $g$. Then $f$
is a circuit bounded conjugator for the pair $(c,d)$ and it has smaller circuit length; we arrive at
contradiction. Hence, the states of $g$ along the circuit solve different conjugacy problems and therefore
$|u|\leq |\OS(c)|\cdot|\OS(d)|$. Now consider finitary states of $g$. We can assume that $g$ is chosen is
such a way that the value of $\max (\mbox{depth of $f$})$, where $f$ ranges over all finitary states of
$g$, is the smallest possible over all conjugators for $(c,d)$ with $g|_u=g$. Let $g|_w$ be a finitary
state. Then every state of $g$ along the orbit of $w$ under $c$ is finitary. Hence the configuration
$\mathcal{C}$ for $(c,d)$ that corresponds to the orbit of $w$ is satisfied by a finitary automorphism.
However the depth of $\mathcal{C}$ is not greater than $|\Lambda(c,d)|$ by
Lemma~\ref{lemma_configur_conju_depth}. Hence the depth of every state of $g$ along the orbit of $w$ is not
greater than $|\Lambda(c,d)|$. Therefore $g$ satisfies the conditions in Step~2 and the pair $(c,d)$ is
detected in this step.

We have proved that every pair from the action of $a$ on $X^l$ is detected in Steps~$1,2$. The other pairs coming from
the orbits of the action of $a$ on smaller levels, in particular the pair $(a,b)$, are detected in
Step~$3$.
\end{proof}

\textbf{Second approach: by calculation of active states.} Let $a,b$ be two bounded automorphisms. We check
whether $a,b$ are conjugate in $\Aut(T)$ and if not then they are not conjugate in $\pol[0]$. So further we
assume that $a,b$ are conjugate in $\Aut(T)$. Every conjugator for the pair $(a,b)$ can be constructed
level by level as described in Section~\ref{Section Conj} on page~10, by choosing the conjugating
permutation for every orbit of $a$. The number of orbits may grow when we pass from one level to the next,
and consequently the number of choices grows. However the number of configurations of orbits is finite, and
it is easy to see (and also follows from the previous method) that if $a$ and $b$ are conjugate in
$\pol[0]$ then there exists a bounded conjugator $h$ such that for all orbits of the same level and of the
same configuration the corresponding states of $h$ are the same. Hence it is sufficient to choose a
conjugating permutation only for configurations. We will show how to count the number of active states
depending on our choice.

Suppose we have constructed a conjugator $h$ up to the $n$-th level. Consider an orbit $\mathcal{O}$ of the
action of $a$ on $X^{n}$ and let $\mathcal{C}=\{(\alpha ,\beta);DP_{\mathcal{C}}\}$ be its configuration
(see Remark~\ref{rem_configuration_for_orbit}) and $v$ be the least element in $\mathcal{O}$. The set
$DP_{\mathcal{C}}$ remembers only the pairs $(c,d)$ which appear in the formula
$h|_{(v)a^{i}}=c^{-1}h|_{v}d$, here $c=a^{i}|_{v}$ and $d=b^{i}|_{v^{h}}$ for $i=0,\ldots
,|\mathcal{O}|-1$; we will call $h|_{(v)a^{i}}$ a state of \textit{type} $(c,d)$. However the number of
states of type $(c,d)$ is lost in this way. To preserve this information we introduce the nonnegative
integer column-vector $u_{\mathcal{C}}$ of dimension $|DP_{\mathcal{C}}|$, where $u_{\mathcal{C}}(c,d)$ for
$(c,d)\in DP_{\mathcal{C}}$ is equal to the number of $i$ such that $c=a^{i}|_{v}$ and $d=b^{i}|_{v^{h}}$.
When we pass to the next level, we choose some permutation $\pi \in \CPi (\alpha ,\beta)$ and define
$x^{h|_{v}}=x^{\pi}$ for $x\in X$. Then we check which states of $h$ on the vertices from the orbit
$\mathcal{O}$ are active and which are not: the state $h|_{(v)a^{i}}$ of type $(c,d)$ is active if the
permutation $\pi_{c^{-1}}\pi \pi_{d}$ is non-trivial. We store this information in the row-vector
$\theta_{\mathcal{C},\pi}$ of dimension $|DP_{\mathcal{C}}|$ by making $\theta _{\mathcal{C},\pi}(c,d)=1$
if $\pi_{c^{-1}}\pi \pi_{d}\neq e$ and $\theta_{\mathcal{C},\pi}(c,d)=0$ otherwise. Hence, when we choose
the permutation $\pi$ then the number of active states of $h$ along the orbit $\mathcal{O}$ is equal to
$\theta_{\mathcal{C},\pi}\cdot u_{\mathcal{C}}=\sum \theta_{\mathcal{C},\pi}(c,d)u_{\mathcal{C}}(c,d)$.

Let $\Lambda $ be the set of all configurations for the pair $(a,b)$. Let $\Pi =\bigtimes_{\mathcal{C}\in
\Lambda}\CPi _{\mathcal{C}}$, where $\CPi_{\mathcal{C}}=\CPi (\alpha ,\beta)$ and $(\alpha ,\beta)$ is the
main pair of $\mathcal{C}$. We view $\Pi $ as the set of choices, so that when we choose $\pi \in \Pi $ we
have chosen a conjugating permutation for every configuration. The sets $\Lambda $ and $\Pi $ are finite.
For $\pi =(\pi _{\mathcal{C}})_{\mathcal{C}\in \Lambda}$ define $\theta_{\pi}:=(\theta_{\mathcal{C},\pi
_{\mathcal{C}}})_{\mathcal{C}\in \Lambda}$ and construct the square nonnegative integer matrix $A_{\pi}$ of
dimension $\sum_{\mathcal{C}\in \Lambda }|DP_{\mathcal{C}}|$, where the rows and columns of $A_{\pi}$ are
indexed by pairs $[\mathcal{C},(c,d)]$ with $(c,d)\in DP_{\mathcal{C}}$. The entry of $A_{\pi}$ in the
intersection of $[\mathcal{C}_1,(c_1,d_1)]$-row and $[\mathcal{C}_2,(c_2,d_2)]$-column is calculated as
follows. Recall the construction of configurations $\mathcal{C}^{\prime}$ induced by $\mathcal{C}_1$ and
$\pi_{\mathcal{C}_1}$, and given in Equation~(\ref{eqn_construction_configurations}). Let $(\alpha,\beta)$
be the main pair of $\mathcal{C}_1$, consider orbits $\orb_1,\ldots,\orb_k$ of the action of $\alpha$ on
$X$, let $x_j$ be the least element in $\orb_j$ and $m_j=|\orb_j|$. Let
$\mathcal{C}^{\prime}_j=\mathcal{C}^{\prime}_{x_j,\pi_{\mathcal{C}_1}}$ be the induced configurations.
Define the $[\mathcal{C}_1,(c_1,d_1)]\times [\mathcal{C}_2,(c_2,d_2)]$-entry of $A_{\pi}$ as
\[
\sum_{j} \left|\left\{ 0\leq i<m_j\, |\, \mathcal{C}^{\prime}_j=\mathcal{C}_2 \ \mbox{ and } \
((\alpha^ic_1)|_{x_j}, (\beta^id_1)|_{x_j^{\pi_{\mathcal{C}_1}}})=(c_2,d_2) \right\}\right|.
\]
In other words, the $[\mathcal{C}_1,(c_1,d_1)]\times [\mathcal{C}_2,(c_2,d_2)]$-entry of $A_{\pi}$ is equal
to the number of pairs of type $(c_2,d_2)$ and of configuration $\mathcal{C}_2$ induced by one pair
$(c_1,d_1)$ from configuration $\mathcal{C}_1$. Now if we have a column-vector
$u=(u_{\mathcal{C}})_{\mathcal{C}\in \Lambda}$, where $u_{\mathcal{C}}(c,d)$ is the number of states of
type $(c,d)$ and of configuration $\mathcal{C}$ that we have at certain level, and we choose $\pi \in \Pi$,
then the number of pairs of each configuration on the next level is given by the vector $A_{\pi}u$. Put
$\mathscr{M}=\{A_{\pi}:\pi \in \Pi \}$.

Now consider all orbits of $a$ on $X^{n}$, take their configurations with respect to $h$ defined up to the
$n$-th level, and define the column-vector $u=(u_{\mathcal{C}})_{\mathcal{C}\in \Lambda}$ as above. To
define the action of $h$ on the $(n+1)$-st level we choose $\pi =(\pi_{\mathcal{C}})_{\mathcal{C}\in
\Lambda}\in \Pi $, and for every orbit with configuration $\mathcal{C}$ we define the action of the states
of $h$ along this orbit using permutation $\pi _{\mathcal{C}}$ as usual (we choose a permutation for every
configuration even if not all configurations appear on the $n$-th level). In this way the conjugator $h$ is
defined up to the $(n+1)$-st level. Then the number of active states of $h$ on the $n$-th level is equal to
$\theta_{\pi}\cdot u$. The vector $v=(v_{\mathcal{C}})_{\mathcal{C}\in \Lambda}$, where
$v_{\mathcal{C}}(c,d)$ is equal to the number of all states of type $(c,d)$ over all orbits of $a$ on
$X^{n+1}$ with configuration $\mathcal{C}$, is equal to $v=A_{\pi}u$.

The process starts at the zero level, where we have the vector $u_{0}=(u_{\mathcal{C}})$ such that
$u_{\mathcal{C}}(e,e)=1$ for the configuration $\mathcal{C}=\{(a,b);DP_{\mathcal{C}}=\{(e,e)\}\}$, which
corresponds to the pair $(a,b)$, and $u_{\mathcal{C}^{\prime}}(c,d)=0$ for all other pairs and
configurations. Then we make choices $\pi_{0},\pi_{1},\ldots ,\pi_{n},\ldots $ from $\Pi$ and construct the
conjugator $h$. It follows from the above discussion that the activity of $h$ can be calculated by the
following rules:
\begin{equation*}
\theta_{n}(h)=\theta_{\pi_{n}}u_{n}\qquad \mbox{ and }\qquad u_{n+1}=A_{\pi_{n}}u_{n}
\end{equation*}
for all $n\geq 0$. If there is a choice such that the sequence $\theta_{n}(h)$ is bounded, then there will
be an eventually periodic choice, and hence the constructed conjugator $h$ will be finite-state and
bounded.

Hence the automorphisms $a$ and $b$ are conjugate in the group $\pol[0]$ if and only if there exists a
sequence $A_{n}\in \mathscr{M}$ such that the corresponding sequence $\theta_{A_{n}}u_{n}$ is bounded. The
last problem is solvable and can be deduced from the result presented in Appendix.

In this method we don't need to solve the auxiliary conjugacy problems in $\mathrm{Pol}({-1})$ as in the
previous method, but the problem reduces to certain matrix problem which should also be solved, while the
previous method was direct. We demonstrate both approaches in Examples~\ref{ex_conj_bounded1} and
\ref{ex_conj_bounded2} of Section~\ref{Section_Examples}.

We note that both approaches also solve the respective simultaneous conjugacy problems. We have proved the following
theorem.

\begin{theorem}
The (simultaneous) conjugacy problem in the group of bounded automata is
decidable.
\end{theorem}

The above methods not only solve the studied conjugacy problem but also provide a construction for a
possible conjugator.

Similarly, one can solve the conjugacy problem for bounded automorphisms in every group $\pol[n]$. However, we have a
stronger statement.

\begin{proposition}
Two bounded automorphisms are conjugate in the group $\pol[\infty] $ if and only if they are conjugate in the group
$\pol[0]$.
\end{proposition}

\begin{proof}
Let $a$ and $b$ be two bounded automorphisms, which are conjugate in $\pol[n]$ for $n\geq 1$. We proceed as
in the first method above. Again the problem reduces to the case when a conjugator $h\in\pol[n]$ lies on a
circuit. Let $u$ be the word which is read along the circuit so that $h|_u=h$. We consider the two cases as
in the proof of correctness of the first approach.

If $u^a=v\neq u$ then the state $h|_v$ should be in $\pol[n-1]$. But then, $h=h|_u=a|_u h|_v
(b|_{u^h})^{-1}\in\pol[n-1]$. Hence, $a$ and $b$ are conjugate in $\pol[n-1]$. The same arguments work if $w^a\neq w$
for some word $w$ of the form $uu\ldots u$.

Suppose $w^a=w$ for every word $w$ of the form $uu\ldots u$. Then $h^{-1}a|_wh=b_{w^h}$. If $a|_w=e$ (and
hence $b|_{w^h}=e$) then define the automorphism $g$ by the rules $g|_w=e$, $g|_v=h|_v$ for all $v\in
X^{|w|}$, $v\neq w$, and the action of $g$ on $X^{|w|}$ is the same as that of $h$. Then $g$ belongs to
$\pol[n-1]$ and it is a conjugator for $(a,b)$.

If $a|_w\neq e$ for every word $w=uu\ldots u$, then some state $a|_w$ is a circuit automorphism and
$a|_w|_v=a|_w$ for some word $v$ of the form $uu\ldots u$. Without loss of generality we can suppose that
$a|_u=a$ and $b|_{u^h}=b$. Then the states $a|_v$ and $b|_{v^h}$ are finitary for all $v\in X^{|u|}$,
$v\neq u$. Consider every orbit $\orb$ of the action of $a$ on $X^{|u|}\setminus u$, let $v\in\orb$ be the
least element in $\orb$ and $m=|\orb|$. Then the finitary automorphisms $a^{m}|_{v}$ and $b^{m}|_{v^h}$ are
conjugate in $\Aut(T)$, and hence they are conjugate in $\pol[-1]$. Define the automorphism $g$ by the
rules: the action of $g$ on $X^{|u|}$ is the same as that of $h$, $g|_u=g$, $g|_v$ is a finitary conjugator
for the pair $(a^{m}|_{v},b^{m}|_{v^h})$, and $g|_{(v)a^i}=\left(a^i|_{v}\right)^{-1} g|_v b^i|_{v^h}$
(also finitary) for every $i=1,\ldots,m-1$ and every orbit $\orb$. Then $g$ is a bounded conjugator for the
pair $(a,b)$.

Inductively we get that $a$ and $b$ are conjugate in $\pol[0]$.
\end{proof}

\section{Examples}\label{Section_Examples}

All examples will be over the binary alphabet $X=\{0,1\}$.

We will frequently use the automorphism $a\in \Aut(T)$ given by the recursion $a=(e,a)\sigma $, where
$\sigma =(0,1)\in \text{Sym}(X)$ is a transposition, which is called the (binary) \textit{adding machine}.
The automorphism $a$ has infinite order, and acts transitively on each level $X^{n}$ of the tree $T$. In
particular, every automorphism which acts transitively on $X^{n}$ for all $n$, is conjugate with $a$ in the
group $\Aut(T)$.

In the next example we investigate the interplay between such properties as being finite-state, contracting, bounded,
polynomial, having or not a finite orbit-signalizer.

\begin{example}
\label{ex_example1} The adding machine $a$ is a bounded automorphism, hence it is contracting and has finite
orbit-signalizer, here $\OS(a)=\{a\}$.

The automorphism $b$ given by the recursion $b=(a,b)$ is finite-state, $\Q(b)=\{e,a,b\}$, $b$ belongs to
$\mathrm{Pol}({1})\setminus \pol[0]$, and $\OS(b)=\{a,b\}$. However $b$ is not contracting, because all elements
$b^{n}=(a^{n},b^{n})$ for $n\geq 1$ are different and would belong to the nucleus.

The automorphisms $b_1=(a,b_2)\sigma $, $b_2=(a,b_1)$ belong to $\mathrm{Pol}({1})\setminus \pol[0]$, but
they have infinite orbit-signalizers. All elements $a^{2n}b_1$ for $n\geq 0$ are different and belong to
the set $\OS(b_1)$. At the same time, $b_1$ and $b_2$ are contracting, for the self-similar group generated
by $a,b_1,b_2$ has nucleus $\mathcal{N}=\{e,a^{\pm 1},b_1^{\pm 1},b_2^{\pm 1},(a^{-1}b_1)^{\pm
1},(a^{-1}b_2)^{\pm 1},(b_1^{-1}b_2)^{\pm 1}\}$.

The automorphism $c=(c,c)\sigma$ is non-polynomial, contracting, and has finite orbit-signalizer, here
$\OS(c)=\{e,c\}$.

The automorphism $d=(d,d^{-2})\sigma $ is contracting, the nucleus of the group $\langle d\rangle $ is
$\mathcal{N}=\{e,d^{\pm 1},d^{\pm 2},d^{\pm 3}\} $. At the same time, the group $\langle a,d\rangle $ is
not contracting; for $da=(da,d^{-2})$ and its powers $(da)^{n}$ are different and would be in the nucleus.

The automorphism $g=(a,g^{2})$ is functionally recursive but not finite-state. Hence the automorphism
$f=(g,g^{-1})\sigma $ is functionally recursive, not finite-state, and has finite orbit-signalizer, here
$\OS(f)=\{e,f\}$.
\end{example}

In the next example we illustrate the solution of the order problem.

\begin{example}\label{ex_order_problem}
Consider the automorphisms $b=(a,b)$ and $a=(1,a)\sigma$. The order graph $\Phi(a)$ is a subgraph of $\Phi(b)$ shown in
Figure~\ref{fig_Order_Graphs}. There is a cycle labeled by $2$, hence $a$ and $b$ have infinite order.

The order graph $\Phi(c)$ for the automorphism $c=(c,\sigma)$ is shown in
Figure~\ref{fig_Order_Graphs}. There are no cycles with labels $>1$, hence $c
$ has finite order, here $|c|=2$.
\end{example}

\begin{figure}
\begin{center}
\psfrag{1}{$1$} \psfrag{2}{$2$} \psfrag{a}{$a$} \psfrag{b}{$b$} \psfrag{c}{$c$} \psfrag{s}{$\sigma$} \psfrag{e}{$e$}
\epsfig{file=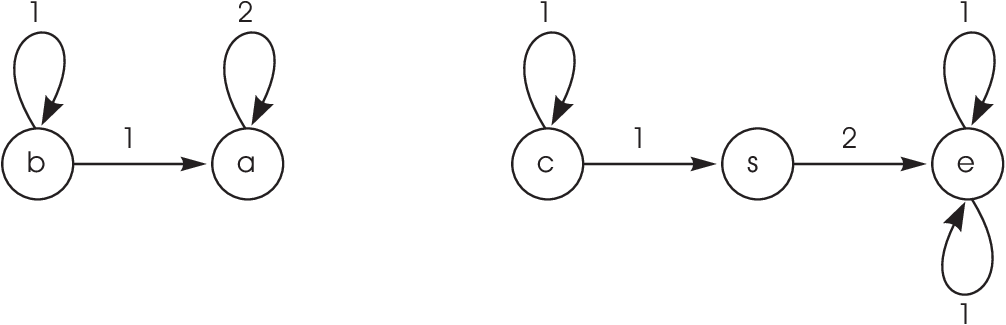,width=110mm} \caption{The order graphs $\Phi(b)$ (on the left) and $\Phi(c)$ (on the
right)}\label{fig_Order_Graphs}
\end{center}
\end{figure}

Let us illustrate the construction of the conjugator graph and basic
conjugators.

\begin{example}\label{ex_conjugacy_problem}
Consider the conjugacy problem for the trivial automorphism $e$ with itself. Here $\OS(e)=\{e\}$ and
$\CPi(e,e)=\Sym(X)=\{\varepsilon,\sigma\}$. The conjugator graph $\Psi(e,e)$ is shown in
Figure~\ref{fig_Conjugacy_Graphs}. There are two defining subgraphs of the graph $\Psi(e,e)$, each consists of the one
vertex $(e,e,\pi)$ with loops in it for $\pi\in\{\varepsilon,\sigma\} $. The corresponding basic conjugators are
$h_1=(h_1,h_1)=e$ and $h_2=(h_2,h_2)\sigma$.

Consider the conjugacy problem for the adding machine $a=(e,a)\sigma$ and its inverse $a^{-1}=(a^{-1},e)\sigma$. Here
$\OS(a)=\{a\}$, $\OS(a^{-1})=\{a^{-1}\}$, and $\CPi(a,a^{-1})=\{\varepsilon,\sigma\}$. There is one orbit of the action
of $a$ on $\{0,1\}$, $a^2|_0=a$ and $a^{-2}|_{0^{\pi}}=a^{-1}$ for every $\pi\in\{\varepsilon,\sigma\}$. The conjugator
graph $\Psi(a,a^{-1})$ is shown in Figure~\ref{fig_Conjugacy_Graphs}. There are two defining subgraphs of the graph
$\Psi(a,a^{-1})$, each consists of the one vertex $(a,a^{-1},\pi)$ with loop in it for $\pi\in\{\varepsilon,\sigma\}$.
The corresponding basic conjugators are $h_1=(h_1,h_1a^{-1})$ and $h_2=(h_2,h_2)\sigma$.

Consider the conjugacy problem for the adding machine $a=(e,a)\sigma$ and the automorphism $b=(e,b^{-1})\sigma$. Here
$\OS(a)=\{a\}$, $\OS(b)=\{b,b^{-1}\}$, and $\CPi(a,b)=\CPi(a,b^{-1})=\{\varepsilon,\sigma\}$. There is one orbit of the
action of $a$ on $\{0,1\}$, $a^2|_0=a$, $b^2|_{0^{\pi}}=b^{-1}$, and $b^{-2}|_{0^{\pi}}=b$ for every
$\pi\in\{\varepsilon,\sigma\}$. The conjugator graph $\Psi(a,b)$ is shown in Figure~\ref{fig_Conjugacy_Graphs}. There
are four defining subgraphs of the graph $\Psi(a,b)$, each consists of the two vertices $(a,b,\pi_1)$ and
$(a,b^{-1},\pi_2)$ with the induced edges for $\pi_1,\pi_2\in\{\varepsilon,\sigma\}$. The corresponding basic
conjugators $h_1,h_2,h_3,h_4$ are defined as follows
\begin{equation*}
\begin{array}{llll}
h_1=(g_1,g_1) \  & h_2=(g_2,g_2) \  & h_3=(g_3,ag_3)\sigma \  &
h_4=(g_4,ag_4)\sigma \\
g_1=(h_1,h_1b) \  & g_2=(h_2,h_2)\sigma \  & g_3=(h_3,h_3b) \  & g_4=(h_4,h_4)\sigma,
\end{array}
\end{equation*}
where $g_1,g_2,g_3,g_4$ are actually the basic conjugators for the pair $(a,b^{-1})$.
\end{example}

\begin{figure}
\begin{center}
\psfrag{eee}{\small $(e,e,\varepsilon)$} \psfrag{ees}{\small $(e,e,\sigma)$} \psfrag{aae}{\small
$(a,a^{-1},\varepsilon)$} \psfrag{aas}{\small $(a,a^{-1},\sigma)$} \psfrag{abe}{\small $(a,b,\varepsilon)$}
\psfrag{ab1s}{\small $(a,b^{-1},\sigma)$} \psfrag{ab1e}{\small $(a,b^{-1},\varepsilon)$} \psfrag{abs}{\small
$(a,b,\sigma)$} \psfrag{0}{\small $0$} \psfrag{0,1}{\small $0,1$} \epsfig{file=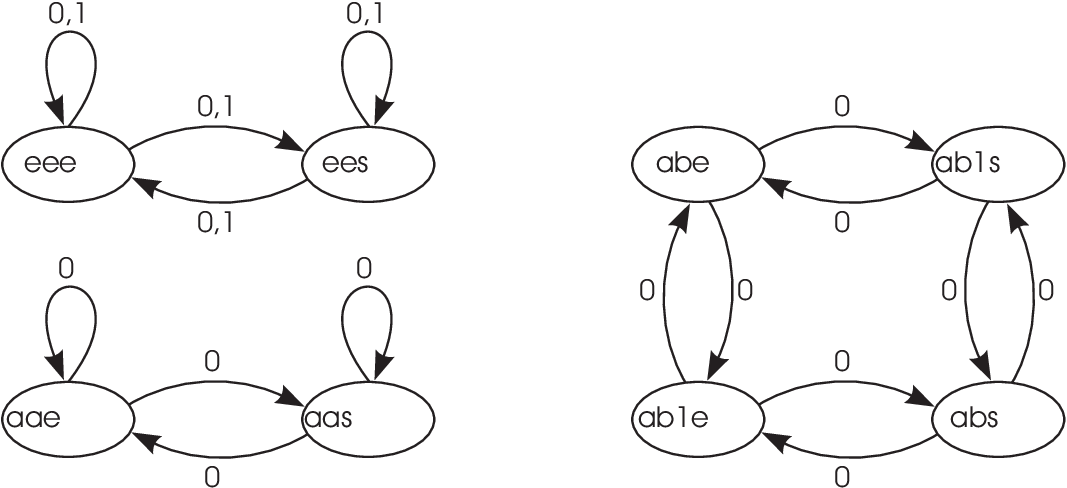,width=120mm} \caption{The
conjugator graph $\Psi(e,e)$ (on the top left), $\Psi(a,a^{-1})$ (on the bottom left), and $\Psi(a,b)$ (on the
right)}\label{fig_Conjugacy_Graphs}
\end{center}
\end{figure}

The next example shows that the condition of having finite orbit-signalizers cannot be dropped in
Theorem~\ref{theor_contr_conjug}, and that Theorem~\ref{thm_bounded_conju_in_Aut_f} does not hold for
polynomial automorphisms.

\begin{example}\label{ex_contr_finiteOS_counter_ex}
Consider the automorphisms $b_1=(a,b_2)\sigma $, $b_2=(a,b_1)$ defined in Example~\ref{ex_example1}.
Inductively one can prove that the state $b_1^{2^{n}}|_{0^{n}}$ is active for every $n$, and hence the
automorphism $b_1$ acts transitively on $X^{n}$ for every $n$. Thus $a$ and $b_1$ have the same orbit types
(see page~\pageref{page_orbit_type}) and therefore they are conjugate in the group $\Aut(T)$. Both $a$ and
$b_1$ are contracting, however, $b_1$ has infinite orbit-signalizer, and hence it is not conjugate with $a$
in the group $\FSG$, by Proposition~\ref{prop_fso_necessary}.
\end{example}

Finally, we illustrate the solution of the conjugacy problem in the group of bounded automata.

\begin{example}\label{ex_conj_bounded1}
Consider the conjugacy problem for the adding machine $a=(e,a)\sigma$ and its inverse $a^{-1}=(a^{-1},e)\sigma$ in the
group of bounded automata.

There are two configurations for the pair $(a,a^{-1})$:
\begin{align*}
\mathcal{C}_1=\{(a,a^{-1}), DP_1=\{(e,e)\}\},\quad \mathcal{C}_2=\{(a,a^{-1}), DP_2=\{(e,e),(e,a^{-1})\}\}.
\end{align*}
Neither of them is satisfied by the trivial automorphism, and hence by a finitary automorphism. In
particular, $a$ and $a^{-1}$ are not conjugate in the group $\mathrm{Pol}({-1})$. The pair $(a,a^{-1})$ is
not detected in Step~1 of the first approach, basically, because $(a,a^{-1})$ is not conjugate in
$\pol[-1]$. There are no $u$ that satisfy Step~2, because $a$ has no fixed vertices. Hence, $a$ and
$a^{-1}$ are not conjugate in the group $\pol[\infty]$.

For the second method we get the choice set $\Pi =\{(\varepsilon ,\varepsilon),(\varepsilon
,\sigma),(\sigma ,\varepsilon),(\sigma ,\sigma)\}$. The configuration $\mathcal{C}_{1}$ induces the
configuration $\mathcal{C}_{2}$ on the next level when we choose the conjugating permutation $\varepsilon
$; here the pair $(e,e)$ induces one pair $(e,e)$ and one pair $(e,a^{-1})$. For the choice $\sigma$, the
configuration $\mathcal{C}_{1}$ induces $\mathcal{C}_{1}$, and the pair $(e,e)$ gives two pairs $(e,e)$.
For the choice $\varepsilon$, the configuration $\mathcal{C}_{2}$ induces $\mathcal{C}_{2}$, here the pair
$(e,e)$ induces one pair $(e,e)$ and one pair $(e,a^{-1})$, and the pair $(e,a^{-1})$ gives two pairs
$(e,a^{-1})$. For the choice $\sigma$, the configuration $\mathcal{C}_{2}$ induces $\mathcal{C}_{2}$, here
the pair $(e,e)$ gives two pairs $(e,e)$, and the pair $(e,a^{-1})$ gives one pair $(e,e)$ and one pair
$(e,a^{-1})$. We get the following set of matrices $A_{\pi}$ and vectors $\theta_{\pi}$:
\begin{align*}
A_{(\varepsilon ,\varepsilon)}&=\left(
\begin{array}{ccc}
0 & 0 & 0 \\
1 & 1 & 0 \\
1 & 1 & 2
\end{array}
\right) ,&A_{(\varepsilon ,\sigma)}=\left(
\begin{array}{ccc}
0 & 0 & 0 \\
1 & 2 & 1 \\
1 & 0 & 1
\end{array}
\right),\\ A_{(\sigma ,\varepsilon)}&=\left(
\begin{array}{ccc}
2 & 0 & 0 \\
0 & 1 & 0 \\
0 & 1 & 2
\end{array}
\right) ,&A_{(\sigma ,\sigma)}=\left(
\begin{array}{ccc}
2 & 0 & 0 \\
0 & 2 & 1 \\
0 & 0 & 1
\end{array}
\right) .
\end{align*}
\begin{equation*}
\theta_{(\varepsilon ,\varepsilon)}=(0,0,1),\quad \theta_{(\varepsilon ,\sigma)}=(0,1,0),\quad \theta_{(\sigma
,\varepsilon)}=(1,0,1),\quad \theta_{(\sigma ,\sigma)}=(1,1,0).
\end{equation*}%
The initial vector is $u_{0}=(1,0,0)^{t}$ and on $n$-th step we get $u_{n+1}=A_{\pi_{n}}u_{n}$ and $\theta_{n}=\theta
_{\pi_{n}}u_{n}$ when we choose $\pi_{n}\in \Pi $. For any choice $\{\pi_{n}\}_{n\geq 0}\subset \Pi $ the sequence
$\theta_{n}$ has exponential growth, and hence $a$ and $a^{-1}$ are not conjugate in the group $\mathrm{Pol}({\infty
})$ of polynomial automata.
\end{example}

\begin{example}\label{ex_conj_bounded2}
Consider the conjugacy problem for the bounded automorphisms $b=(\sigma,b)$ and $c=(c,\sigma)$. Notice that the pairs
$\sigma$, $c$ and $b$, $\sigma$ are not conjugate in $\Aut(T)$. Hence, only $\sigma$ may appear as the action on $X$ of
a possible conjugator, and we take $\CPi(b,c)=\{\sigma\}$. Here $\OS(b)=\{e,\sigma,b\}$ and $\OS(c)=\{e,\sigma,c\}$,
$\CPi(\sigma,\sigma)=\{\varepsilon,\sigma\}$. The configurations for the pair $(b,c)$ are the following:
\begin{align*}
&\mathcal{C}_1=\{(b,c), DP_1=\{(e,e)\}\}, \qquad \mathcal{C}_2=\{(\sigma,\sigma), DP_2=\{(e,e)\}\}, \\
&\mathcal{C}_3=\{(e,e),DP_3=\{(e,e)\}\}.
\end{align*}
Let us check what configurations are satisfied by a finitary automorphism as described after
Corollary~\ref{cor_bound_conj_in_finitary}. The configurations $\mathcal{C}_2$ and $\mathcal{C}_3$ are
satisfied by the trivial automorphism and have depth $0$. For $\pi\in\CPi(b,c)$ we get that the
configuration $\mathcal{C}^{\prime}_{1,\pi}$ induced by $\mathcal{C}_1$ is equal to $\mathcal{C}_1$.
Therefore $\mathcal{C}_1$ is not satisfied by a finitary automorphism, and hence $b$ and $c$ are not
conjugate in $\mathrm{Pol}({-1}) $. In Step~1 of the first approach we detect pairs $(e,e)$ and
$(\sigma,\sigma)$. In Step~2 if we take $u=1$ and $h=(e,h)\sigma$ then $h^{-1}bh=c$. Hence $(b,c)$ is
detected in Step~2 and $b,c$ are conjugate in the group $\pol[0]$.

For the second method, we take for the choice set $\Pi =\{(\sigma ,\varepsilon ,\varepsilon),(\sigma ,\sigma
,\varepsilon),(\sigma ,\varepsilon ,\sigma),(\sigma ,\sigma ,\sigma)\}$. All matrices $A_{\pi}$ are the same for $\pi
\in \Pi $. The vectors $\theta_{\pi}$ are as follows
\begin{equation*}
A_{\pi}=\left(
\begin{array}{ccc}
1 & 0 & 0 \\
1 & 0 & 0 \\
0 & 2 & 2
\end{array}
\right) ,\qquad
\begin{array}{cc}
\theta_{(\sigma ,\varepsilon ,\varepsilon)}=(1,0,0),\  & \theta_{(\sigma
,\sigma ,\varepsilon)}=(1,1,0), \\
\theta_{(\sigma ,\varepsilon ,\sigma)}=(1,0,1),\  & \theta_{(\sigma ,\sigma ,\sigma)}=(1,1,1).
\end{array}
\end{equation*}
The initial vector is $u_{0}=(1,0,0)^{t}$ and $u_{n}=A^{n}u_{0}=(1,1,2^{n}-2) $ independently of our choice. If we
choose $\pi_{n}=(\sigma ,\varepsilon ,\varepsilon)$ for all $n\geq 0$ then the sequence $\theta_{n}=(1,0,0)\cdot
u_{n}=1$ is bounded. Hence $b$ and $c$ are conjugate in the group $\pol[0]$. The conjugator corresponding to our choice
is the adding machine $a$.
\end{example}

\bibliographystyle{plain}

\appendix

\renewcommand{\labelenumi}{\Roman{enumi}.}
\newcommand{\boundedtraj}{{bounded trajectory problem}}

\newcommand{\setvec}{{V}}
\newcommand{\setmat}{{\cal{M}}}
\newcommand{\z}{{\mathbb{Z}}}
\newcommand{\cN}{{\mathbb{N}}}
\newcommand{\vectun}{{e }}
\providecommand{\com[2]}{\begin{tt}[#1: #2]\end{tt}} \providecommand{\comrj[1]}{\com{RJ}{#1}}

\section{On the existence of a bounded trajectory for nonnegative integer systems}%

\begin{center} \large Rapha\"el M. Jungers \end{center}


The purpose of this note is to prove the following theorem.

\begin{theorem}
The following \textit{\boundedtraj} is decidable.\vspace{6pt}

{\sc INSTANCE:} A finite set of nonnegative integer matrices $\setmat=\{A_1,\dots,A_m\}\subset \z^{n\times
n}$ and a finite set of nonnegative integer vectors $\setvec=\{u_1,\dots,u_p\}\subset \z^{n}$.\vspace{3pt}

{\sc PROBLEM:} Determine whether there exists a sequence $(i_t)_{t=1}^\infty,\, i_t \in \{1,\dots,m\}$ and
an \emph{initial vector} $v_0\in \setvec$ such that the sequence of vectors determined by the recurrence
\begin{equation}\label{eq-rec}
v_t=A_{i_t}v_{t-1},\, t=1,2,\dots
\end{equation}
is bounded.

\end{theorem}

\vspace{6pt}

In the following, $ \setmat^*,\setmat^t$ denote respectively the set of all products of matrices in
$\setmat,$ and the set of all products of length $t$ of matrices in $\setmat.$

This problem is closely related to the so called \emph{joint spectral subradius} of a set of matrices,
which is the smallest asymptotic rate of growth of any long product of matrices in the set, when the length
of the product increases. For a survey on the joint spectral subradius and similar quantities, see
\cite{jungers_lncis}. While the joint spectral subradius is notoriously Turing-uncomputable in general, we
will see that in our precise situation, we are able to provide an algorithmic solution to the problem.

The next lemma states that if there is a bounded trajectory, then it can be obtained with an eventually
periodic sequence of matrices.

\begin{lemma}\label{lemma1}
Let $\setmat,\setvec$ be an instance of the \boundedtraj. There exists a sequence $(v_t)$ as given by
Equation (\ref{eq-rec}) which is bounded if and only if there exist matrices $A,B\in \setmat^*$ and a
vector $v_0\in \setvec$ such that the sequence $u_t=A^tBv_0$ is bounded.
\end{lemma}
\begin{proof}
The if-part is obvious. In the other direction, if the set $\{v_t=A_{i_t}\dots A_{i_1}v_0\}$ is bounded it
must be finite. Thus, there actually exist $A,B\in\setmat^*$ such that $v=Bv_0$ and $Av=v$.
\end{proof}

As it turns out it is possible to check in polynomial time, given a nonnegative integer matrix $A$ and a
vector $v,$ whether the sequence $u_t=A^tv$ is bounded.  In fact, as we show below, this does not really
depend on the actual value of the entries of $A$ and $v,$ but only for each entry of $A$ whether it is
equal to zero, one, or larger than one, and for each entry of $v$ whether it is equal to zero or larger
than zero. For this reason we introduce two operators that get rid of the inessential information.

\begin{definition}
Given any nonnegative matrix (or vector) $M\in \mathbb{Z}^{n_1\times n_2}$, we denote by $\sigma(M)$ the
matrix in $\{0,1,2\}^{n_1\times n_2}$ in which all entries larger than two are set to two, while the other
entries are equal to the corresponding ones in $M$.

Similarly, we denote by $\tau(M)$ the matrix in $\{0,1\}^{n_1\times n_2}$ in which all entries larger than
zero are set to one, while the other entries are equal to zero.
\end{definition}

We can now prove the main ingredient of our algorithm.

\begin{theorem}\label{thm-single-matrix}
Given a nonnegative matrix $A\in \mathbb{Z}^{n\times n}$, and two indices $1\leq i,j\leq n,$ the sequence
$(A^t)_{i,j}$  remains bounded when $t$ grows if and only if the sequence $(\sigma(A)^t)_{i,j}$ remains
bounded. Moreover, the boundedness of the sequence $(A^t)_{i,j}$ can be checked in polynomial time.
\end{theorem}

\begin{proof}
We consider the matrix $A$ as the adjacency matrix of a directed graph on $n$ vertices.  The edges of this
graph are given by the nonzero entries of $A$. The graph may have loops, {i.e.}, edges from a node to
itself, which correspond to diagonal entries.  We say that \emph{there is a path (of length $t$) from $i$
to $j$} if there is a power $A^t$ of $A$ such that $(A^t)_{i,j}\geq 1.$ Equivalently, there exist indices
$1\leq i_0,\dots, i_{t}\leq n,$  $i_0=i,\ i_t =j,$ such that for all $0\leq t' \leq t-1,$
$A_{i_{t'},i_{t'+1}}\geq 1$. It is obvious that if there is a path from $i$ to $j,$ then there is such a
path of length less than $n$.

We recall some easy facts from graph theory (see \cite{jungersprotasovblondel06} for proofs and
references). For any directed graph, there is a partition of the set $V$ of its vertices in nonempty
disjoint sets (the \emph{strongly connected components}) $V_1, \ldots , V_I$ such that for all $v,w\in V,$
$v\neq w,$ there is a path from $v$ to $w$ and  a path from $w$ to $v$ if and only if they belong to the
same set in the partition. If there is no path from $v$ to itself, then $\{v\}$ is said to be a
\emph{trivial connected component.} Moreover there exists a (non necessarily unique) ordering of the
subsets in the partition such that for any two vertices $i \in V_k,$ $j \in V_l$, there cannot be a path
from $i$ to $j$ whenever $k
> l$. There is an algorithm to obtain this partition in $O(n)$
operations (with $n$ the number of vertices). In matrix terms, this means that one can find a permutation
matrix $P$ such that the matrix $P^TAP$ is in block upper diagonal form, where each block on the diagonal
corresponds to a strongly connected component.

In the following, we suppose for the sake of clarity that $A$ is already in block triangular shape. It is
clear that entries in the blocks under the diagonal remain equal to zero in any power of $A.$  We need a
different treatment for the entries \emph{within diagonal blocks} and the entries in blocks \emph{above the
diagonal.}
\begin{itemize}
\item {\bf Diagonal blocks.} Let us consider an arbitrary diagonal block $B_l$, which is strongly connected
by definition.  It is easy to see that either all the entries in the block remain bounded or all the
entries are unbounded.  This occurs if and only if the spectral radius of $B_l$ is larger than one. It is
easy to see that given a nonnegative matrix with integer entries whose corresponding graph is strongly
connected, its spectral radius is larger than one if and only if one of these conditions is satisfied:
\begin{itemize}
\item There is an entry in $B_l$ larger than one. \item There is a row in $B_l$ with two entries larger
than zero.
\end{itemize}  Observe that these conditions do only depend on $\sigma(A).$

\item {\bf Non-diagonal blocks.}
Let us consider a particular  $(i,j)$-entry in a non-diagonal block.  We will prove that this entry is
unbounded if and only if one of the following conditions holds (and these conditions can be checked in
polynomial time):
\begin{enumerate}
\item There is a path $(i=i_0,i_1,\dots, i_{t-1},i_t=j)$ from $i$ to $j,$ and one of the entries
$(i_s,i_s)$ is unbounded for $0\leq s \leq t.$

\item \label{cond-ijk} There exists $t$ such that
\begin{equation}
A^t_{i,i}, A^t_{i,j}, A^t_{j,j} \geq 1.
\end{equation}
Moreover, if this condition holds, there is such a $t$ smaller than $n^3$\cite[Proposition
1]{jungersprotasovblondel06}.

\item There exist two indices $i'\neq j'$ such that there is a path from $i$ to $i'$, a path from $j'$ to $j,$
and such that the pair $(i',j')$ satisfies condition II above.
\end{enumerate}
\end{itemize}
It is straightforward to check that any of these three conditions implies that the $(i,j)$-entry is
unbounded.

We {\textbf{claim}} that if the $(i,j)$-entry is unbounded yet I and II fail, then III should hold. We
prove the claim by induction on the number of vertices.  The claim is obvious for $n=1$. Take now an
arbitrary $n,$ and suppose that the claim holds for $n-1.$ We consider an $n$-by-$n$ matrix such that the
$(i,j)$-entry is unbounded, but I and II fail.

First, we must have that either $(A^t)_{i,i}=0$ for all $t$ or $(A^t)_{j,j}=0$ for all $t$. Indeed, it is
not difficult to see that if there exist $ t_1,t_2,t_3$ such that $(A^{t_1})_{i,i}\geq 1,$
$(A^{t_2})_{j,j}\geq 1,$ $(A^{t_3})_{i,j}\geq 1,$ then condition II holds (see \cite[proof of Proposition
1]{jungersprotasovblondel06} for a proof). We thus suppose without loss of generality that $(A^t)_{j,j}=0$
for all~$t$, which means that $\{j\}$ is a trivial connected component. (If it is not the case, then the
proof is symmetrically the same replacing $j$ with $i$).

Now, since $$(A^t)_{i,j}=\sum_{k}A^{t-1}_{i,k}A_{k,j}, $$ it comes that there is an index $k\neq j$ such
that $(A^t)_{i,k}$ is unbounded and $A_{k,j}\geq 1.$ Moreover $k\neq i$ because Condition I does not hold.
Thus, if the pair $(i,k)$ satisfies Condition II the proof is done, because there is a path from $k$ to
$j.$ If not, we now show that one can remove the row and column corresponding to $j$ in the matrix $A$ and
obtain a submatrix $A'$ which fulfills the assumptions of the claim.

Firstly, the entry $(i,k)$ is also unbounded in the powers of $A'$. Indeed, we know that $\{j\}$ is a
trivial component and there is no path from $j$ to $k$. In matrix terms, it means that $A$ can be
block-upper triangularized with the entry corresponding to $k$ before the entry corresponding to $j,$ and
$k,j$ in different blocks. Hence, one can erase all the rows and columns of all blocks after the one
corresponding to $k$ without changing the successive values of the entry $(i,k)$.

Secondly, we just assumed that $(i,k)$ does not satisfy Condition II, and it cannot satisfy Condition I
either, because then Condition I would also hold on $(i,j)$ in the matrix $A$, since there is a path from
$k$ to $j$ in $A$. Thus, one can apply the induction hypothesis and the claim is proved, because, for any
node $j',$ if there is a path in $A'$ from $j'$ to $k,$ there is a path in $A$ from $j'$ to $j$ (obtained
by appending the edge $(k,j)$).

Finally, remark that all the conditions here only depend on which entries are different from zero (since
they amount to check the existence of paths), except for the condition on the boundedness of the
$(i,i)$-entry and the $(j,j)$-entry in Condition I, which is treated in the first part of this proof
(diagonal blocks).
\end{proof}

We are now in position to present our algorithm:

\textbf{Algorithm for solving the \boundedtraj. }\vspace{-0.2cm}

\begin{enumerate}
\item Construct a new instance of the \boundedtraj:
\[
\setmat'=\{\sigma(A):A\in \setmat\} \quad \mbox{ and }\quad \setvec'=\{\tau(v):v \in \setvec\}.
\]
\item REPEAT
\begin{itemize}
\item $\setvec'\leftarrow \setvec'\cup \{\tau(Av):A\in \setmat', v\in \setvec'\}$
\item $\setmat'\leftarrow \setmat'\cup \{\sigma(AB):A\in \setmat', B\in \setmat'\}$
\end{itemize}
UNTIL no new element is added to $\setvec',\setmat'.$

\item For every pair $(A,v)\in \setmat'\times \setvec'$:\\
      IF the sequence $u_t=A^tv$ is bounded, RETURN YES and STOP.

\item RETURN NO.
\end{enumerate}

\begin{theorem} Algorithm is correct and stops in finite time.
\end{theorem}

\begin{proof}
We first show how to implement Line III in the algorithm. For any column corresponding to a nonzero entry
of $v,$ one just has to check whether all the entries of this column remain bounded in the sequence of
matrices $A^t.$ Thanks to Theorem~\ref{thm-single-matrix}, it is possible to fulfill this requirement

By Lemma~\ref{lemma1} we need to check whether there exist $A,B\in \setmat^*$ and $v\in\setvec$ such that
$A^tBv$ is bounded. Note that $A^tBv$ is bounded if and only if $\sigma(A)^t\tau(Bv)$ is bounded. The
finite sets $\{\sigma(A):A\in \setmat^*\}$ and $\{\tau(Bv):\,B\in\setmat^*, v\in\setvec\}$ are precisely
the sets $\setmat'$ and $\setvec'$ obtained after the loop at Line II in the algorithm. Therefore the
algorithm is correct and stops in finite time.
\end{proof}

Let us show that one should not expect a polynomial time algorithm for the problem.
\begin{proposition}\label{prop-np}
Unless $P=NP,$ there is no polynomial time algorithm for solving the \boundedtraj.
\end{proposition}
\begin{proof}
Our proof is by reduction from the {\textit{mortality problem}} which is known to be NP-hard, even for
nonnegative integer matrices \cite[p. 286]{blondel-mortal}.  In this problem, one is given a set of
matrices $\setmat,$ and it is asked whether there exists a product of matrices in $\setmat^*$ which is
equal to the zero matrix.

We now construct an instance $\setmat',\setvec$ of the \boundedtraj{} such that there is a bounded
trajectory for this instance if and only if the set $\setmat$ is mortal: take $\setmat'=\{A'=2A:A\in
\setmat\}$ and $v_0=\vectun$ (the "all ones vector") as the unique vector in $\setvec.$

Now, it is straightforward that there exists a sequence $(i_t)_{t=1}^\infty$, $i_t \in \{1,\dots,m\}$, such
that the sequence of vectors $$A'_{i_t}\dots A'_{i_1}\vectun=2^tA_{i_t}\dots A_{i_1}\vectun$$ is bounded if
and only if the set $\setmat$ is mortal. Indeed, the matrices in $\setmat$ have nonnegative integer
entries, and if the vector $A_{i_t}\dots A_{i_1}\vectun$ is different from zero, then its (say, Euclidean)
norm is greater or equal to one.
\end{proof}

Also, if one relaxes the requirement that the matrices and the vectors are nonnegative, then the problem
becomes undecidable, as shown in the next proposition.
\begin{proposition}
The \boundedtraj{} is undecidable if the matrices and vectors in the instance can have negative entries.
\end{proposition}
\begin{proof} (sketch)
It is known that the mortality problem with entries in $\z$ is undecidable \cite[Corollary
2.1]{jungers_lncis}.  We reduce this problem to the \boundedtraj{} in a way similar as in Proposition
\ref{prop-np}, except that we build much larger matrices: we make $2^n$ copies of each matrix in $\setmat$
and place them in a large block-diagonal matrix. That is, our matrices in $\setmat'$ are of the shape
\[
\{\mbox{diag}(2A,2A,\dots, 2A):A\in \setmat\}.
\]
Now we take $V=\{v_0\},$ where $v_0\in\{-1,1\}^{2^nn}$ is the concatenation of all the different
$n$-dimensional $\{-1,1\}$-vectors. This vector has a bounded trajectory if and only if there exists a zero
product in $\setmat^*.$
\end{proof}

\newcommand{\noopsort}[1]{} \newcommand{\singleletter}[1]{#1}

\end{document}